\DeclareSymbolFont{cyrletters}{OT2}{wncyr}{m}{n}
\DeclareMathSymbol{\Sha}{\mathalpha}{cyrletters}{"58}
\theoremstyle{plain}
\theoremstyle{definition}
\newtheorem{theorem}{Theorem}[section]
\newtheorem{thm}{Theorem}[section]
\newtheorem{lemma}[theorem]{Lemma}
\newtheorem{proposition}[theorem]{Proposition}
\theoremstyle{definition}
\newtheorem{eg}[theorem]{Example}
\theoremstyle{remark}
\newtheorem{remark}[theorem]{Remark}
\renewcommand{\AA}{\mathbb{A}}
\newcommand{\QQ}{\mathbb{Q}}
\newcommand{\ZZ}{\mathbb{Z}}
\newcommand{\FF}{\mathbb{F}}
\newcommand{\GG}{\mathbb{G}}
\newcommand{\PP}{\mathbb{P}}
\newcommand{\Ocal}{{\mathcal O}}
\newcommand{\half}{1/2}
\newcommand{\inuap}{\inv_{v'}(A(P_{v'}))}
\newcommand{\invap}{\inv_v(A(P_v))}
\newcommand{\invaq}{\inv_v(A(Q_v))}
\newcommand{\invaqo}{\inv_v(A(Q_0))}
\DeclareMathOperator{\inv}{inv}
\newcommand{\defi}[1]{\textsf{#1}} 
\newcommand{\Br}{\textup{Br}}
\newcommand{\et}{\textup{\'et}}
\g@addto@macro\bfseries{\boldmath}  
\begin{document}
	
	\begin{title}
		{Arithmetic of Ch\^atelet surfaces under extensions of base fields}  
	\end{title}
	\author{Han Wu}
	\address{University of Science and Technology of China,
		School of Mathematical Sciences,
		No.96, JinZhai Road, Baohe District, Hefei,
		Anhui, 230026. P.R.China.}
	\email{wuhan90@mail.ustc.edu.cn}
	\date{}
	\subjclass[2020]{11G35, 14G12, 14G25, 14G05.}
	\keywords{rational points, Hasse principle, weak approximation, Brauer-Manin obstruction, Ch\^atelet surfaces.}




	\begin{abstract} 
		For Ch\^atelet surfaces defined over number fields, we study two
		arithmetic properties, the Hasse principle and weak approximation, when
		passing to an extension of the base field. Generalizing a construction of Y.
		Liang, we show that for an arbitrary extension of number fields $L/K,$ there
		is a Ch\^atelet surface over $K$ which does not satisfy weak approximation over
		any intermediate field of $L/K,$ and a Ch\^atelet surface over $K$ which satisfies
		the Hasse principle over an intermediate field $L'$ if and only if $[L' : K]$ is
		even.
	\end{abstract} 
	
	\maketitle

	\section{Introduction}

	Throughout this paper, let $K$ be a number field, and let 
	$\Omega_K$ be the set of all nontrivial places of $K.$ For each $v\in \Omega_K,$ let $K_v$ denote the completion of $K$ at $v.$
	Let $S\subset \Omega_K$ be a finite subset. Let $\AA_K$ (respectively $\AA_K^S$) be the ring of ad\`eles (ad\`eles without $S$ components) of $K.$ We always assume that a field $L$ is a finite extension of $K.$ Let $S_L\subset \Omega_L$ denote the set of places of $L$ lying over places in $S.$

	Let $X$ be a proper algebraic variety over $K.$ The set $X(K)$ of $K$-rational
	points of $X$ can be viewed as a subset of the set $X(\AA_K)$ of adelic points via
	the diagonal embedding. We say that $X$ is a \defi{counterexample to the Hasse principle} if $X(\AA_K)\neq\emptyset$ whereas $X(K)=\emptyset.$ By the properness of $X,$ the set of adelic points $X(\AA_K^S)$ can be identified with the product $\prod_{v\in \Omega_K\backslash S}X(K_v),$ and hence equipped with the product topology of $v$-adic topologies. We say that $X$
	satisfies \defi{weak approximation} (respectively \defi{weak approximation off $S$}) if $X(K)$ is dense in $X(\AA_K)$ (respectively in $X(\AA_K^S)$), cf. \cite[Chapter 5.1]{Sk01}.

		In this paper, we focus on the case where $X$ is a Ch\^atelet surface over $K,$ i.e., a smooth projective models of affine surface in $\AA_K^3$ defined by the equation
		\begin{equation}\label{equation}
			y^2-az^2=P(x),
		\end{equation} where $a\in K^\times,$ and $P(x)$ is a separable degree-$4$ polynomial in $K[x].$
 Over a special number field $K,$ the Hasse principle and weak approximation for
	Ch\^atelet surfaces (and many other varieties) have been studied in a lot of
	earlier papers (e.g. \cite{Is71}, \cite{CTSSD87a}, \cite{CTSSD87b}, \cite{Po09}, etc.). In \cite{Li18}, Liang
	pioneered the study of non-invariance of arithmetic
	properties under an extension of base fields and proved, among others, that
	for any number field $K,$ there is a Ch\^atelet surface $V$ over $K$ and a quadratic
	extension $L/K$	 such that $V(K)\neq \emptyset,$ $V$ satisfies weak approximation, but the
	base extension $V_L$ does not satisfies weak approximation off the archimedean places of $L.$ In this paper, we generalize Liang's construction and obtain
	further results that apply to an arbitrary extension $L/K.$
	
	Our main results are the following:
		\begin{thm}[Theorem \ref{thm: interesting result for Chatelet surface1}]
		Let  $L/K$ be any extension of number fields, and let $S \subset \Omega_K\backslash \{$all complex and $2$-adic places$\}$ be a finite nonempty subset such that every place in $S$ splits completely in $L.$
		
		Then, there exists a Ch\^atelet surface $V$ defined over $K$ with $V(K)\neq \emptyset,$ such that
		for every intermediate field $K\subset L'\subset L$ and every finite subset $T'\subset \Omega_{L'},$ the base extension $V_{L'}$ satisfies weak approximation off $T'$ if and only if  $T'\cap S_{L'}\neq \emptyset.$ In particular,  the surface $V_{L'}$ does not satisfy weak approximation for every $K\subset L'\subset L.$
	\end{thm}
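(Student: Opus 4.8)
The plan is to recast weak approximation in terms of the Brauer--Manin obstruction, and then to produce $V$ by a variant of Liang's construction. Recall that for a Ch\^atelet surface $Y$ over a number field $F$ with $Y(\AA_F)\neq\emptyset$ the Brauer--Manin obstruction to the Hasse principle and to weak approximation is the only one \cite{CTSSD87a}, so $\overline{Y(F)}=Y(\AA_F)^{\Br Y}$; since $Y$ is proper $Y(\AA_F)$ is compact, the Brauer--Manin set is closed, and $Y$ satisfies weak approximation off a finite $T$ exactly when the projection $Y(\AA_F)^{\Br Y}\to Y(\AA_F^{T})$ is onto. I will build $V$ with a factorisation $P=P_1P_2$ into quadratics irreducible over $L$ (hence over every intermediate $L'$), set $\mathcal A:=(a,P_1(x))\in\Br K(V)$, and arrange that: (i) for every $K\subseteq L'\subseteq L$ the group $\Br(V_{L'})/\Br_0(V_{L'})$ is generated by $\mathcal A$ and has order $2$; (ii) for every such $L'$ and every place $w$ of $L'$, the map $P_w\mapsto\inv_w\mathcal A(P_w)$ from $V(L'_w)$ to $\tfrac12\ZZ/\ZZ$ takes both values $0$ and $\tfrac12$ if $w\in S_{L'}$ and is identically $0$ if $w\notin S_{L'}$; and (iii) $V(K)\neq\emptyset$. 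Granting (i)--(iii), the Brauer--Manin set over $L'$ is $\{(P_w):\sum_w\inv_w\mathcal A(P_w)=0\}$, and since the nonzero invariants occur only at the finitely many places of $S_{L'}$, where both values are attained independently, its projection to $V(\AA_{L'}^{T'})$ is onto if and only if $S_{L'}\cap T'\neq\emptyset$ --- this is the assertion, and $T'=\emptyset$ (together with $S\neq\emptyset\Rightarrow S_{L'}\neq\emptyset$) gives the last sentence.

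For the construction, I first fix monic quadratics $P_1,P_2\in K[x]$ with $P_1P_2$ separable such that $P_1,P_2$ stay irreducible over $L$ and $d_1,d_2,d_1d_2$ (with $d_i=\operatorname{disc}P_i$) are non-squares in $L$, and such that at each finite $v_0\in S$ both $P_i$ split over $K_{v_0}$ (i.e.\ $d_i\in\Ocal_{v_0}^{\times2}$) with a common root modulo $v_0$ whose two lifts differ by an element of odd valuation, while at each real $v_0\in S$ the $P_i$ have real roots with overlapping intervals of negativity and positive leading terms. Let $\Sigma$ be the finite set of non-archimedean $v\nmid2$, $v\notin S$, dividing $d_1d_2\operatorname{Res}(P_1,P_2)$. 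Then I choose $a\in K^\times\smallsetminus K^{\times2}$ with $\sqrt a\notin L$, such that $a$ is a non-square unit at each finite place of $S$, negative at each real place of $S$, and a square in $K_v$ at every $v\in\Sigma\cup\{v\mid2\}$ and at every real $v\notin S$; such an $a$ exists by weak approximation on $K^\times/K^{\times2}$, after possibly enlarging $\Sigma$ by finitely many auxiliary primes (at which $a$ is also made a local square) to kill a class-group obstruction to prescribing the valuations of $a$. This is the Ch\^atelet surface $V$, and it is the promised generalisation of Liang's example.

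It remains to verify (i)--(iii). For (ii) with $w\notin S_{L'}$, let $v=w|_K$: either $v\nmid2\infty$ and $(a,P)$ has good reduction at $v$, so $\mathcal A$ extends to an Azumaya algebra over a smooth proper $\Ocal_{L'_w}$-model (no residue along the special fibre since $a$ is a unit there), and its invariant on $L'_w$-points factors through $\Br\kappa(w)=0$; or $a\in K_v^{\times2}\subseteq(L'_w)^{\times2}$ (the cases $v\mid2$ and $v$ real $\notin S$), so $(a,\cdot)_w=0$; or $w$ is complex --- in all cases the invariant is identically $0$. For $w$ over $v_0\in S$, the hypothesis that $v_0$ splits completely in $L$ gives $L'_w=K_{v_0}$, so one argues over $K_{v_0}$: for finite $v_0$, since $a$ is a non-square unit, $\inv_{v_0}\mathcal A(P_{v_0})=\operatorname{ord}_{v_0}P_1(x)\bmod2$, and the chosen root configuration provides $x\in K_{v_0}$ of either parity lifting to $V(K_{v_0})$ (because then $\operatorname{ord}_{v_0}(P_1P_2)(x)$ is even, so $P_1P_2(x)$ is a local norm from $K_{v_0}(\sqrt a)$); for real $v_0$, $\inv_{v_0}\mathcal A$ equals $0$ or $\tfrac12$ according to the sign of $P_1(x)$ on $V(\RR)$, and both signs occur by the overlap condition. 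In particular $\mathcal A\neq0$ in $\Br L'(V)$, forcing $\sqrt a\notin L'(V)$; then $P_1P_2=y^2-az^2$ is a norm from the nontrivial quadratic extension $L'(V)(\sqrt a)/L'(V)$, so $(a,P_1)=(a,P_2)$ on $V$, and combined with the standard computation of $\Br V/\Br_0 V$ for the Ch\^atelet conic bundle --- whose four geometric degenerate fibres form two Galois orbits of size $2$ --- this gives (i): $\Br(V_{L'})/\Br_0(V_{L'})$ has order at most $2$, hence exactly $2$. Finally (iii) follows from (ii): choosing at each place of $K$ a local point with vanishing invariant yields an adelic point orthogonal to $\mathcal A$, hence to $\Br V$, so $V(K)\neq\emptyset$ by the result quoted at the start.

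I expect the main obstacle to be the coordination in the second paragraph: realising simultaneously, at the finitely many places of $S$, all the local conditions on $a$ and on the roots of $P_1,P_2$ needed to make $\inv_{v_0}\mathcal A$ genuinely non-constant, while retaining good reduction --- or $a$ a local square --- at every remaining place. Once this configuration is pinned down, the complete-splitting hypothesis on $S$ is exactly what propagates ``bad along $S$ and nowhere else'' from $K$ to every intermediate field $L'$ without any change in the local picture, and the Brauer-group bookkeeping in (i) together with the residue/sign computations in (ii) is then routine.
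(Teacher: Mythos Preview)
Your overall strategy coincides with the paper's: both reduce the statement to controlling the local invariants of a single generator $\mathcal A$ of $\Br(V_{L'})/\Br_0(V_{L'})$ over every intermediate $L'$, and then invoke \cite{CTSSD87a,CTSSD87b}. The deduction of the ``if and only if'' from (i)--(iii) is exactly the argument of the paper's Proposition~4.2, and your derivation of $V(K)\neq\emptyset$ from (ii) is a valid alternative to the paper's explicit rational point $(0,b,0)$.

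The gap is in the construction. You fix $P_1,P_2$ first and choose $a$ afterwards, asking that $a$ be a \emph{non-square unit} at each finite place of $S$ and a local square on $\Sigma\cup 2_K$. Your good-reduction argument for (ii) at the (infinitely many) finite $v\notin S\cup\Sigma\cup 2_K$ then needs $a$ to be a $v$-unit there as well. Taken together these force $v(a)$ to be even at \emph{every} finite place, so modulo squares $a$ is a global unit of $\Ocal_K$. Over $K=\QQ$ this means $a\equiv\pm1$, and $-1$ is a non-square unit at an odd prime $p$ only when $p\equiv3\pmod4$; for $S=\{5\}$ no admissible $a$ exists. Weak approximation on $K^\times/K^{\times2}$ only controls finitely many places, and ``enlarging $\Sigma$'' does not help: once you also demand $a\in K_q^{\times2}$ at each new prime $q$ you have added, the square class of $a$ is again that of a global unit. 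So the existence of your $a$ is unproved in general, and at any leftover $v$ with $v(a)$ odd the invariant $\inv_v(a,P_1(x))$ is the Legendre symbol of $P_1(x)$, which has no reason to be identically~$1$ on $V(K_v)$.

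The paper avoids this by reversing the order: it chooses $a$ first with $v(a)$ \emph{odd} at each finite $v\in S$, records the finite set $S'\supset S$ of all places where $v(a)$ is odd or $\tau_v(a)<0$, and only \emph{then} picks the coefficients $b,c$ of $P(x)=(cx^2+1)\bigl((1+cb^2)x^2+b^2\bigr)$ so that explicit Hilbert-symbol computations (cases (2)--(4) in the proof of Proposition~4.1) force the invariant to vanish at every $v\in S'\setminus S$. Choosing $P$ after $a$ is exactly what absorbs the uncontrolled bad places of $a$. Your outline can be repaired by adopting this order; as written, the second paragraph does not establish the required $a$.
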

	
	\begin{thm}[Theorem \ref{thm: interesting result for Chatelet surface2}]
		For any extension of number fields $L/K,$ there exists a Ch\^atelet surface $V$ over $K$ with $V(\AA_K)\neq \emptyset,$ such that
		for every intermediate field $K\subset L'\subset L,$
		\begin{itemize}
			\item If the degree $[L':K]$ is odd, then the surface $V_{L'}$ is a counterexample to the Hasse principle, i.e. $V(L')=\emptyset.$ In particular, the surface $V$ is a counterexample to the Hasse principle.
			\item If the degree $[L':K]$ is even, then the surface $V_{L'}$ satisfies weak approximation. In particular, in this case, the set $V(L')\neq \emptyset.$
		\end{itemize}
	\end{thm}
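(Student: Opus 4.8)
The strategy is to reduce everything to the theorem of Colliot-Th\'el\`ene, Sansuc and Swinnerton-Dyer (\cite{CTSSD87a}, \cite{CTSSD87b}): for any Ch\^atelet surface $W$ over a number field $F$, the \BMo is the only obstruction to both the Hasse principle and weak approximation, so $W(F)$ is dense in $W(\AA_F)^{\Br W}$. Hence it suffices to exhibit a single Ch\^atelet surface $V$ over $K$, and a single quaternion class $\mathcal A\in\Br(V)$, with: $V(\AA_K)\neq\emptyset$; the class $\mathcal A$ generating $\Br(V_{L'})/\Br(L')\cong\ZZ/2$ for every intermediate field $K\subset L'\subset L$; and, for every such $L'$ and every place $w$ of $L'$ over $v\in\Omega_K$, the map $\inv_w(\mathcal A(\,\cdot\,))$ being constant on $V(L'_w)$, with $\sum_w\inv_w(\mathcal A(P_w))\equiv \tfrac12[L':K]\pmod 1$ for every $(P_w)\in V(\AA_{L'})$. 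Granting this: for $[L':K]$ odd the Brauer sum is $\tfrac12\neq0$ while $V_{L'}(\AA_{L'})\neq\emptyset$ (base change an adelic point of $V$), so $V_{L'}(\AA_{L'})^{\Br}=\emptyset$ and $V(L')=\emptyset$; for $[L':K]$ even the sum vanishes identically, so $V_{L'}(\AA_{L'})^{\Br}=V_{L'}(\AA_{L'})\neq\emptyset$ and $V_{L'}$ satisfies weak approximation, whence $V(L')\neq\emptyset$. The case $L'=K$ gives the ``in particular'' statements.

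For the construction, generalizing Liang's \cite{Li18}, I would first fix a finite non-dyadic place $v_0$ of $K$ that is unramified in $L$ (infinitely many exist, $L/K$ ramifying at only finitely many places), and then choose $a,\pi,u\in K^\times$ such that: $a$ is totally positive, a non-square in $K$, a non-square unit at $v_0$, and $\operatorname{ord}_v(a)$ is even at every $v\neq v_0$ (so $K_v(\sqrt a)/K_v$ is trivial or unramified quadratic everywhere); $\pi$ is a uniformizer at $v_0$ and a unit at every other finite place; $u$ is a non-square unit at $v_0$ and a unit at every other finite place; the extension $K(\sqrt a,\sqrt\pi,\sqrt u)/K$ has degree $8$ and is linearly disjoint from $L$ over $K$; and the finitely many remaining local solvability conditions hold (at real places $a>0$ makes $V$ automatically locally solvable; at the dyadic places and the finitely many bad places one imposes $V(K_v)\neq\emptyset$, e.g.\ by making $P(\pi)$ a local norm from $K_v(\sqrt a)$). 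Such $a,\pi,u$ exist by weak approximation together with a Chebotarev/Dirichlet argument for the disjointness. Put $P(x)=\pi(x^2-\pi)(x^2-u)$, a separable quartic, let $V$ be the Ch\^atelet surface $y^2-az^2=P(x)$, and set $\mathcal A=(a,\,x^2-\pi)$. Since $a\in K^\times$, the only place where $\mathcal A$ could ramify on $V$ is the degenerate fibre over $\{x^2=\pi\}$, where its residue is the class of $a$ in the function field of that fibre; but that field contains $\sqrt a$, so the residue vanishes and $\mathcal A\in\Br(V)$. The Brauer-group computation for Ch\^atelet surfaces (\cite{CTSSD87a}) shows, using that $a$ is a non-square in $K(\sqrt\pi)$ and in $K(\sqrt u)$ and that $\sqrt a\notin K(\sqrt\pi,\sqrt u)$, that $\mathcal A$ generates $\Br(V)/\Br(K)\cong\ZZ/2$; linear disjointness propagates this to $\Br(V_{L'})/\Br(L')$ for every intermediate $L'$, since $x^2-\pi$ and $x^2-u$ stay irreducible and $\sqrt a$ stays outside the relevant fields.

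Next I would evaluate $\mathcal A$ locally. Over $K_{v_0}$: because $u$ is a non-square unit, any $K_{v_0}$-point $(x_0,y_0,z_0)$ must have $\operatorname{ord}_{v_0}(x_0)\geq1$ — otherwise $\operatorname{ord}_{v_0}P(x_0)$ is odd (when $x_0$ is a unit, $x_0^2-u$ is a unit because $u$ is a non-square, and $x_0^2-\pi$ is a unit; when $\operatorname{ord}_{v_0}(x_0)<0$ one has $\operatorname{ord}_{v_0}P(x_0)=1+4\operatorname{ord}_{v_0}(x_0)$), which is impossible for $y^2-az^2=P(x_0)$ with $a$ a non-square unit, and the same excludes the fibre over $x=\infty$. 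Hence $\operatorname{ord}_{v_0}(x_0^2-\pi)=1$ and $\mathcal A(P_{v_0})=(a,x_0^2-\pi)_{v_0}=(a,\pi)_{v_0}=\tfrac12$. Over $K_v$ with $v\neq v_0$: if $a$ is a square in $K_v$ then $\mathcal A$ is trivial; otherwise $a$ is a non-square unit, solvability forces $\operatorname{ord}_vP(x_0)$ even on $V(K_v)$, and — for the finitely many $v$ where this must be checked, the others having good reduction where $\mathcal A$ is Azumaya and evaluates to $0$ — the congruence $\pi\not\equiv u\pmod v$ shows at most one of $\operatorname{ord}_v(x_0^2-\pi),\operatorname{ord}_v(x_0^2-u)$ is positive, forcing $\operatorname{ord}_v(x_0^2-\pi)$ even and $\mathcal A(P_v)=0$. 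So over $K$ the local invariants are $\tfrac12$ at $v_0$ and $0$ elsewhere, and $V$ is already a counterexample to the Hasse principle over $K$ of Iskovskikh type \cite{Is71}. The same bookkeeping over $L'_w$ goes through because $L'_w/K_v$ is a subextension of $L_w/K_v$, hence unramified wherever $L/K$ is: for $w\mid v_0$ one has $[L'_w:K_{v_0}]=f(w/v_0)$, the element $a$ becomes a square in $L'_w$ exactly when $f(w/v_0)$ is even (then $\inv_w\mathcal A(\,\cdot\,)=0$), while for $f(w/v_0)$ odd both $a$ and $u$ remain non-square units in $L'_w$ and the $K_{v_0}$-computation repeats verbatim, giving $\inv_w\mathcal A(\,\cdot\,)=\tfrac12$; in all cases $\inv_w\mathcal A(\,\cdot\,)\equiv\tfrac12[L'_w:K_{v_0}]\pmod 1$, and for $w\nmid v_0$ one gets $\inv_w\mathcal A(\,\cdot\,)=0$ as before. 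Summing over all $w$ and using $\sum_{w\mid v_0}e(w/v_0)f(w/v_0)=[L':K]$ with $e(w/v_0)=1$ gives $\sum_w\inv_w(\mathcal A(P_w))\equiv\tfrac12[L':K]\pmod1$, as required.

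The step I expect to be the main obstacle is the construction itself: producing $a,\pi,u\in K^\times$ meeting the whole list of square-class, integrality, incongruence and local-solvability conditions while simultaneously forcing $K(\sqrt a,\sqrt\pi,\sqrt u)$ to be linearly disjoint from $L$ over $K$ — a delicate but routine blend of weak approximation and Chebotarev's theorem — together with checking that the chosen $\mathcal A$ lies in $\Br(V)$ and, crucially, remains a generator of $\Br(V_{L'})/\Br(L')$ for \emph{every} intermediate $L'$, so that no new Brauer class can revive an obstruction once $[L':K]$ is even. By contrast, with $P$ in the explicit shape above the local evaluations of $\mathcal A$ are elementary Hilbert-symbol computations, and their stability under the unramified residue-field extensions occurring on passing from $K_v$ to $L'_w$ is exactly the mechanism by which the parity of $[L':K]$ governs the dichotomy.
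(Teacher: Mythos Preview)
Your overall strategy matches the paper's: invoke \cite{CTSSD87a,CTSSD87b} to reduce everything to the Brauer--Manin obstruction, exhibit a generator $\mathcal A$ of $\Br(V_{L'})/\Br(L')\cong\ZZ/2\ZZ$ for every intermediate $L'$, and arrange that $\sum_w\inv_w(\mathcal A(P_w))\equiv\tfrac12[L':K]\pmod 1$ identically on $V(\AA_{L'})$. Where you diverge is in the choice of the distinguished place. The paper picks $v_0$ that \emph{splits completely} in $L$ (such $v_0$ exist by \v{C}ebotarev), so that for every intermediate $L'$ and every $w\mid v_0$ one has $L'_w=K_{v_0}$; the local invariant at $w$ is then literally the invariant computed over $K_{v_0}$, namely $\tfrac12$, and there are exactly $[L':K]$ such $w$. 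Your residue-degree parity bookkeeping is correct but becomes unnecessary once one uses a completely split place. The paper's defining polynomial is also different, $P(x)=(x^2-c)(bx^2-bc-1)$, with $v_0(a)$ chosen \emph{odd} rather than $a$ a non-square unit at $v_0$, and with $b,c$ produced by strong approximation off $2_K$.

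Two points in your construction do not go through as written. First, the parenthetical ``$\operatorname{ord}_v(a)$ even $\Rightarrow K_v(\sqrt a)/K_v$ trivial or unramified'' is false at dyadic $v$, so you have not controlled $\inv_v\mathcal A$ there; the paper handles this by imposing $a\in K_v^{\times2}$ for every $v\in 2_K$, which kills $\mathcal A$ locally. Second, demanding that $u$ be a unit at \emph{every} finite place forces $u\in\Ocal_K^\times$, and there may be no global unit with $K(\sqrt u)$ linearly disjoint from $L$: over $K=\QQ$ the only nontrivial candidate is $u=-1$, and if $\sqrt{-1}\in L$ your disjointness hypothesis is impossible. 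Likewise, $\pi$ a unit away from $v_0$ forces the prime at $v_0$ to be principal. These constraints are self-imposed and can be relaxed --- allow $\pi,u$ a finite set of auxiliary bad places and impose further Hilbert-symbol conditions there, exactly as the paper does for its parameters $b,c$ --- but as literally stated your data need not exist for a given $L/K$.
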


	To construct the Ch\^atelet surfaces in these theorems, the parameters in
	the equation (\ref{equation}), i.e. the element $a\in K$ and the coefficients of the polynomial
	$P(x),$ need to be chosen carefully using approximation theorems for the affine
	line and \v{C}ebotarev's density theorem. To verify the statements about weak
	approximation and the Hasse principle, we shall analyze the Brauer-Manin
		obstruction and use the well known theorem that for Ch\^atelet surfaces this
		obstruction (to weak approximation or the Hasse principle) is the only one
		(\cite{CTSSD87a}, \cite{CTSSD87b}).

	\section{Notation and preliminaries}
	\subsection{Notation} Given a number field $K,$ let $\Ocal_K$ be the ring of its integers. Let $\infty_K\subset \Omega_K$ be the subset of all archimedean places, and let $2_K\subset \Omega_K$ be the subset of all $2$-adic places. Let $\infty_K^r\subset \infty_K$ be the subset of all real places, and let $\infty_K^c\subset \infty_K$ be the subset of all complex places. Let $\Omega_K^f=\Omega_K\backslash \infty_K$ be the set of all finite places of $K.$ Let $K_v$ be the completion of $K$ at  $v\in \Omega_K.$ 
	For $v\in \infty_K,$ let $\tau_v\colon K\hookrightarrow K_v$ be the embedding of $K$ into its completion. For $v\in \Omega_K^f,$ let $\Ocal_{K_v}$ be its valuation ring, and let $\FF_v$ be its residue field. 
	Let $K^2$ denote the set of square elements of $K.$ Let $\Ocal_S=\bigcap_{v\in \Omega^f_K\backslash S}(K\cap \Ocal_{K_v})$ be the ring of $S$-integers. A strong approximation theorem \cite[Chapter II \S 15]{CF67} states that $K$ is dense in $\AA_K^S$ for any nonempty $S.$  
	In this paper, we only use the following special case:
	\begin{lemma}\label{lemma strong approximation for A^1}
		The set $K$ is dense in $\AA_K^{2_K}.$ 
	\end{lemma}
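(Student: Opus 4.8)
The plan is to read off the statement as the special case $S=2_K$ of the strong approximation theorem quoted in the line just above (\cite[Chapter II \S 15]{CF67}), which asserts that $K$ is dense in $\AA_K^S$ for \emph{every} nonempty finite $S\subset\Omega_K$. The only thing to check is that $2_K$ is nonempty, and this is immediate: $2\Ocal_K$ is a nonzero proper ideal of the Dedekind domain $\Ocal_K$, so it has a prime divisor, i.e. there is at least one place of $K$ lying over the rational prime $2$. Taking $S=2_K$ then gives $\AA_K^{2_K}=\AA_K^{S}$, and hence the density of $K$ in $\AA_K^{2_K}$.

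If instead one wanted a self-contained argument, I would reprove the relevant special case of strong approximation directly. First reduce to a single place: fix $v_0\in 2_K$; since the coordinate projection $\AA_K^{\{v_0\}}\to\AA_K^{2_K}$ is continuous, open and surjective, it suffices to show that $K$ is dense in $\AA_K^{\{v_0\}}$. For this one combines the cocompactness of $\AA_K/K$ --- equivalently, the existence of a compact $W=\prod_v W_v\subset\AA_K$ with $W_v=\Ocal_{K_v}$ for almost all $v$ and $\AA_K=K+W$ --- with a scaling step: given a basic neighborhood of a target point, supported on a finite set $T$ with $v_0\notin T$, one multiplies by an element $\lambda\in K^\times$ chosen to have large valuation at the finite places of $T$, to be integral at all finite places outside $T\cup\{v_0\}$, and to have a pole only at $v_0$. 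Such a $\lambda$ exists by finiteness of the ideal class group, and its archimedean sizes can then be shrunk by multiplying by a suitable unit, using Dirichlet's unit theorem.

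The only point requiring a little care is this last coordination --- making $\lambda$ simultaneously small at the finitely many places of $T$ (including the archimedean ones) while keeping it integral everywhere else --- and this is exactly why the free place $v_0$ is indispensable: it absorbs the compensating pole. But since the general strong approximation theorem is entirely classical and the specialization to $S=2_K$ is trivial, there is no genuine obstacle here; the lemma is recorded only to fix the precise form (approximation away from all $2$-adic places) used by the constructions in the following sections.
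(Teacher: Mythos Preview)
Your proposal is correct and takes essentially the same approach as the paper: the paper does not give a separate proof of this lemma, but simply records it as the special case $S=2_K$ of the strong approximation theorem cited immediately before, which is exactly what your first paragraph does. Your additional sketch of a self-contained argument goes beyond what the paper provides, but is not needed here.
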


	It is not difficult to generalize \cite[Theorem 13.4]{Ne99} to the following version of \v{C}ebotarev's density theorem.
	\begin{theorem}[\v{C}ebotarev]\label{theorem Chebotarev density theorem}
		The set of places of $K$ splitting completely in $L,$ has positive density. 
	\end{theorem}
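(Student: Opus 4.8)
The plan is to reduce the statement to the classical Chebotarev density theorem for Galois extensions, i.e. \cite[Theorem 13.4]{Ne99}, by passing to the Galois closure. Let $M$ be the Galois closure of $L/K$ inside a fixed algebraic closure of $K$. I would first establish the key claim: a place $v$ of $K$ that is unramified in $M$ splits completely in $L$ if and only if it splits completely in $M$. Granting this, only finitely many places of $K$ ramify in $M$, so the set of places of $K$ splitting completely in $L$ differs from the set splitting completely in $M$ by a set of density zero; and \cite[Theorem 13.4]{Ne99}, applied to the Galois extension $M/K$ with the trivial conjugacy class, says this latter set has density $1/[M:K]>0$. This gives the theorem.

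The real content is thus the claim, whose nontrivial direction is that splitting completely in $L$ forces splitting completely in $M$. I would phrase ``$v$ splits completely in a finite extension $F/K$'' in terms of the étale $K_v$-algebra $F\otimes_K K_v$: this holds exactly when $F\otimes_K K_v\cong K_v^{[F:K]}$. This property is visibly invariant under $K$-algebra isomorphisms, so if $v$ splits completely in $L$ it splits completely in every $K$-conjugate $L^\sigma$ of $L$. Since $M$ is the compositum of the finitely many conjugates $L^{\sigma_1},\dots,L^{\sigma_r}$, there is a surjection of $K_v$-algebras from $(L^{\sigma_1}\otimes_K K_v)\otimes_{K_v}\cdots\otimes_{K_v}(L^{\sigma_r}\otimes_K K_v)$ onto $M\otimes_K K_v$; the source is a tensor product of split algebras, hence split, and any quotient $K_v$-algebra of a split algebra $K_v^m$ is again of the form $K_v^{m'}$. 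Therefore $M\otimes_K K_v$ is split, i.e. $v$ splits completely in $M$. The reverse implication is trivial from $K\subset L\subset M$.

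The only step needing care is this reduction to the Galois case, and within it the elementary but crucial fact that ``splits completely'' is preserved under compositum; the density estimate itself is quoted from \cite{Ne99} and is not reproved. An alternative would be to invoke the full statement of Chebotarev for $M/K$ directly, but the étale-algebra formulation keeps the compositum argument transparent and avoids any bookkeeping with decomposition and inertia groups.
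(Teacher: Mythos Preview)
Your argument is correct and is precisely the ``not difficult'' generalization the paper alludes to: the paper gives no proof at all, merely citing \cite[Theorem 13.4]{Ne99} and remarking that the non-Galois case follows. Your reduction via the Galois closure, together with the \'etale-algebra verification that complete splitting passes to composita, supplies exactly the omitted details.
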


	\subsection{Hilbert symbol}

	We use the Hilbert symbol $(a,b)_v\in \{\pm 1\},$ for $a,b\in K_v^\times$ and $v\in \Omega_K.$ By definition, $(a,b)_v=1$ if and only if $x_0^2-ax_1^2-bx_2^2=0$ has a $K_v$-solution in $\PP^2$ with homogeneous coordinates $(x_0:x_1:x_2),$ which equivalently means that the curve defined over $K_v$ by the equation $x_0^2-ax_1^2-bx_2^2=0$ in $\PP^2,$ is isomorphic to $\PP^1.$ The Hilbert symbol gives a symmetric bilinear form on $K_v^\times/K_v^{\times 2}$ with value in $\ZZ/2\ZZ,$ cf. \cite[Chapter XIV, Proposition 7]{Se79}. And this bilinear form is nondegenerate, cf. \cite[Chapter XIV, Corollary 7]{Se79}. 
	
	\subsection{Preparation lemmas} We state the following lemmas for later use.

	\begin{lemma}\label{lemma hilbert symbal lifting for odd prime}
		Let $v$ be an odd place of $K.$ Let $a, b\in K_v^\times$ such that $v(a),v(b)$ are even. Then $(a,b)_v=1.$
	\end{lemma}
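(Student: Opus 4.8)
The plan is to reduce the statement to a standard fact about conics over local fields with odd residue characteristic. Since $v$ is an odd place, the valuation $v\colon K_v^\times \to \ZZ$ restricts to an isomorphism $K_v^\times/K_v^{\times 2} \cong \ZZ/2\ZZ \times \FF_v^\times/\FF_v^{\times 2}$, where the first factor records the parity of the valuation and the second records the class of a unit. The hypothesis that $v(a)$ and $v(b)$ are even means that, modulo squares, both $a$ and $b$ are units; that is, there exist $u, w \in \Ocal_{K_v}^\times$ with $a \equiv u$ and $b \equiv w$ in $K_v^\times/K_v^{\times 2}$. Because the Hilbert symbol factors through $K_v^\times/K_v^{\times 2}$ in each variable, it suffices to show $(u,w)_v = 1$ for units $u,w$.

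Next I would invoke the well-known formula for the Hilbert symbol at an odd place: for units $u, w \in \Ocal_{K_v}^\times$ one has $(u,w)_v = 1$. This is exactly \cite[Chapter XIV, Proposition 8]{Se79} (or the equivalent statement that an odd place is unramified for the quadratic form, so a conic with unit coefficients has good reduction). Concretely, the conic $x_0^2 - u x_1^2 - w x_2^2 = 0$ reduces mod $v$ to a smooth conic over the finite field $\FF_v$, which automatically has an $\FF_v$-point since every smooth conic over a finite field is isomorphic to $\PP^1$; Hensel's lemma then lifts this point to $\Ocal_{K_v}$, so by the definition of the Hilbert symbol recalled in the excerpt, $(u,w)_v = 1$.

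The only genuinely routine point to check is that the reduction of the conic is smooth, which uses that the characteristic of $\FF_v$ is not $2$ (so the diagonal quadratic form $\langle 1, -u, -w\rangle$ is nondegenerate mod $v$, its discriminant being a unit), and that $\FF_v$ is finite; the existence of a rational point on a smooth plane conic over a finite field is the Chevalley--Warning theorem. Combining this with the first paragraph — writing $a = u c^2$ and $b = w d^2$ with $c, d \in K_v^\times$ and using bilinearity $(uc^2, wd^2)_v = (u,w)_v$ — gives $(a,b)_v = 1$, as desired. I do not expect any serious obstacle here; the lemma is essentially a repackaging of the standard local computation of the Hilbert symbol, and the main task is simply to phrase the reduction-to-units step cleanly.
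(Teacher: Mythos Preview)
Your proposal is correct and follows essentially the same approach as the paper: reduce to the case of units by multiplying $a$ and $b$ by suitable even powers of a uniformizer (which are squares), then use Chevalley--Warning to find an $\FF_v$-point on the reduced conic and lift it via Hensel's lemma. The paper phrases the reduction-to-units step slightly more concretely (writing $a_1 = a\pi_v^{-v(a)}$, $b_1 = b\pi_v^{-v(b)}$) rather than invoking the structure of $K_v^\times/K_v^{\times 2}$, but the content is identical.
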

	
	\begin{proof}
		Choose a prime element $\pi_v\in K_v.$ Let $a_1=a\pi_v^{-v(a)}$ and $b_1=b\pi_v^{-v(b)}.$ Since the valuations $v(a)$ and $v(b)$ are even, the elements $\pi_v^{-v(a)}$ and $\pi_v^{-v(b)}$ are in $K_v^{\times 2}.$ So $(a,b)_v=(a_1,b_1)_v$ and $a_1,b_1\in \Ocal_{K_v}^\times.$
		By Chevalley-Warning theorem (cf. \cite[Chapter I \S 2, Corollary 2]{Se73}), the equation $x_0^2-\bar{a}_1x_1^2-\bar{b}_1x_2^2=0$ has a nontrivial solution in $\FF_v.$ For $v$ is odd, by Hensel's lemma, this solution can be lifted to a nontrivial solution in $\Ocal_{K_v}.$ Hence $(a,b)_v=(a_1,b_1)_v=1.$
	\end{proof}
	
	\begin{lemma}\label{lemma Hensel lemma for Hilbert symbal}
		Let $v$ be an odd place of $K.$ Let $a,b,c\in K_v^\times$ such that $v(b)<v(c).$ Then $(a,b+c)_v=(a,b)_v.$
	\end{lemma}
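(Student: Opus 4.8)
The plan is to reduce to the definition of the Hilbert symbol in terms of solvability of the conic $x_0^2 - a x_1^2 - (b+c) x_2^2 = 0$ versus $x_0^2 - a x_1^2 - b x_2^2 = 0$ over $K_v$, and show these two conics are simultaneously solvable. The key observation is that $b + c$ and $b$ differ by a factor that is a square in $K_v^\times$: writing $b + c = b(1 + c/b)$ with $v(c/b) = v(c) - v(b) > 0$, the element $1 + c/b$ lies in $1 + \mathfrak{m}_v$. Since $v$ is odd, Hensel's lemma applies to $t^2 - (1 + c/b)$ (the derivative $2t$ is a unit at $t = 1$ because the residue characteristic is not $2$), so $1 + c/b \in K_v^{\times 2}$. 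Hence $b + c$ and $b$ represent the same class in $K_v^\times / K_v^{\times 2}$.

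From here the result is immediate: the Hilbert symbol $(a, -)_v$ depends only on the class in $K_v^\times / K_v^{\times 2}$ (it is a bilinear form on that group, as recalled from \cite[Chapter XIV]{Se79}), so $(a, b+c)_v = (a, b)_v$. Equivalently, without invoking bilinearity, one can argue directly: a solution $(x_0 : x_1 : x_2)$ in $\PP^2(K_v)$ of $x_0^2 - a x_1^2 - b x_2^2 = 0$ is carried to a solution $(x_0 : x_1 : x_2/s)$ of $x_0^2 - a x_1^2 - (b+c) x_2^2 = 0$, where $s \in K_v^\times$ satisfies $s^2 = 1 + c/b$, and conversely; so one conic has a $K_v$-point if and only if the other does.

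The only step requiring any care is the application of Hensel's lemma, and the hypothesis that $v$ is odd is exactly what makes it work: in residue characteristic $2$ the unit $1 + c/b$ need not be a square even when $v(c/b)$ is large (one would need $v(c/b) > 2v(2)$). Since that hypothesis is granted, there is no real obstacle here. I would simply write out the reduction $b + c = b(1 + c/b)$, cite Hensel for the square root, and conclude by the bilinearity of the Hilbert symbol already recalled above.
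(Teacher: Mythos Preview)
Your proposal is correct and follows essentially the same approach as the paper: write $b+c = b(1+b^{-1}c)$, use Hensel's lemma (valid since $v$ is odd) to conclude $1+b^{-1}c \in K_v^{\times 2}$, and then invoke the fact that the Hilbert symbol only depends on square classes. The paper's proof is just a terser version of what you wrote, without the additional commentary on the conic interpretation or the residue-characteristic-$2$ caveat.
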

	
	\begin{proof}
		Since $v(b)<v(c),$ we have $v(b^{-1}c)>0.$ By Hensel's lemma, we have $1+b^{-1}c\in K_v^{\times 2}.$ So $(a,b+c)_v=(a,b(1+b^{-1}c))_v=(a,b)_v.$
	\end{proof}

	The following two lemmas are well known. We omit their proofs.
	
	\begin{lemma}\label{lemma openness for K^2 and O_K cross}
		The set $K_v^{\times 2}$ is an open subgroup of $K_v^\times.$ If $v\in \Omega_K^f,$ then $\Ocal_{K_v}^\times$ is also an open subgroup of $K_v^\times.$ So, they are nonempty open subset of $K_v.$ 
	\end{lemma}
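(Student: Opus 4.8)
The plan is to use the elementary fact that a subgroup of a topological group is open as soon as it contains an open neighbourhood of the identity (just translate that neighbourhood by each element of the subgroup). Both $K_v^{\times 2}$, which is the image of the squaring homomorphism $x\mapsto x^2$ on $K_v^\times$, and $\Ocal_{K_v}^\times$ are subgroups of $K_v^\times$, so it suffices to exhibit in each case an open neighbourhood of $1$ contained in the subgroup. The last assertion of the statement will then be automatic, since $K_v^\times=K_v\setminus\{0\}$ is itself open in $K_v$.

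For $\Ocal_{K_v}^\times$ with $v\in\Omega_K^f$ this is immediate: the normalized valuation $v\colon K_v^\times\to\ZZ$ is locally constant, so $\Ocal_{K_v}^\times=v^{-1}(0)$ is open; equivalently, $\Ocal_{K_v}^\times=\Ocal_{K_v}\setminus\pi_v\Ocal_{K_v}$ is the difference of two clopen balls, hence clopen, and it contains $1$.

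For $K_v^{\times 2}$ I would argue by cases. If $v$ is archimedean then $K_v^{\times 2}$ is $\RR_{>0}$ or $\CC^\times$, which is open. If $v$ is finite, Hensel's lemma supplies an integer $N\ge 1$ with $1+\pi_v^N\Ocal_{K_v}\subseteq K_v^{\times 2}$: for $u\in 1+\pi_v^N\Ocal_{K_v}$ the polynomial $f(X)=X^2-u$ satisfies $|f(1)|<|f'(1)|^2$ as soon as $N$ exceeds $2\,v(2)$ (and $N=1$ works whenever $v$ is odd, just as in the proofs of Lemma~\ref{lemma hilbert symbal lifting for odd prime} and Lemma~\ref{lemma Hensel lemma for Hilbert symbal}), so $f$ has a root in $\Ocal_{K_v}$ and $u\in K_v^{\times 2}$. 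Since $1+\pi_v^N\Ocal_{K_v}$ is an open neighbourhood of $1$, this shows $K_v^{\times 2}$ is open.

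The only point requiring a little care is the dyadic case $v\mid 2$ of the last paragraph, where one must track the precise power of $\pi_v$ for which Hensel's lemma applies to $X^2-u$; everything else is purely formal. Assembling the pieces, $K_v^{\times 2}$ and, for $v\in\Omega_K^f$, $\Ocal_{K_v}^\times$ are open subgroups of $K_v^\times$, and therefore nonempty open subsets of $K_v$.
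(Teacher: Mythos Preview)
Your argument is correct. Note, however, that the paper does not actually give a proof of this lemma: it is one of the two lemmas introduced with ``The following two lemmas are well known. We omit their proofs.'' So there is no paper proof to compare against; your proposal simply supplies the standard justification that the author chose to suppress. The use of Hensel's lemma to produce an open neighbourhood $1+\pi_v^N\Ocal_{K_v}\subset K_v^{\times 2}$ (with $N>2v(2)$ in the dyadic case) is exactly the expected argument, and your treatment of $\Ocal_{K_v}^\times$ via local constancy of the valuation is likewise routine.
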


	\begin{lemma}\label{lemma openness for v(x)=n}
		Let $v\in \Omega_K^f.$ For any $n\in \ZZ,$ the set $\{x\in K_v|v(x)=n\}$ is a nonempty open subset of $K_v.$
	\end{lemma}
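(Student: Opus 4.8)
The plan is to fix a prime element $\pi_v \in K_v$ and reduce everything to elementary properties of the $v$-adic valuation. Nonemptiness is immediate: $v(\pi_v^n) = n$, so $\pi_v^n$ lies in the set $\{x \in K_v \mid v(x) = n\}$.

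For openness I would give a direct argument via the ultrametric inequality. Suppose $x \in K_v$ with $v(x) = n$. For any $y \in K_v$ with $v(y - x) \geq n + 1$ we have $v(y) = v\bigl(x + (y-x)\bigr) = \min\bigl(v(x), v(y-x)\bigr) = n$, since $v(x) = n < n+1 \leq v(y-x)$. Hence the $v$-adic ball $\{y \in K_v \mid v(y - x) \geq n + 1\}$ — which is a nonempty open neighborhood of $x$ by definition of the $v$-adic topology — is contained in $\{x \in K_v \mid v(x) = n\}$. As $x$ was arbitrary, the set is open.

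Alternatively, one can phrase this in terms of the fractional ideals: $\{x \in K_v \mid v(x) = n\} = \pi_v^n \Ocal_{K_v} \setminus \pi_v^{n+1}\Ocal_{K_v}$, where each $\pi_v^m \Ocal_{K_v}$ is an open subgroup of $K_v$ (a basic neighborhood of $0$, scaled), hence also closed; the set in question is then the intersection of the open set $\pi_v^n \Ocal_{K_v}$ with the complement of the closed set $\pi_v^{n+1}\Ocal_{K_v}$, so it is open. There is no genuine obstacle in this lemma; the only point requiring a moment of care is the strict-versus-nonstrict inequality in the ultrametric estimate, ensuring that the chosen ball around $x$ genuinely avoids all points of valuation $> n$ as well as those of valuation $< n$.
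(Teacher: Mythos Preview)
Your proof is correct. The paper actually omits the proof of this lemma entirely, remarking only that it is ``well known''; thus there is nothing to compare against, and either of your two arguments (the direct ultrametric estimate or the description as $\pi_v^n\Ocal_{K_v}\setminus\pi_v^{n+1}\Ocal_{K_v}$) fills the gap cleanly.
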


	\begin{lemma}\label{lemma openness for hilbert symbal 1}
		Let $v\in \Omega_K^f.$ For any $a\in K_v^\times,$ the sets $\{x\in K_v^\times|(a,x)_v=1\},$ $\{x\in K_v^\times|(a,x)_v=1\}\cap \Ocal_{K_v}$ and $\{x\in \Ocal_{K_v}^\times|(a,x)_v=1\}$ are nonempty open subsets of $K_v.$ 
	\end{lemma}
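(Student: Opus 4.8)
The plan is to reduce each of the three sets to the case already handled, namely that $K_v^{\times 2}$ is open (Lemma \ref{lemma openness for K^2 and O_K cross}), using the bilinearity and continuity properties of the Hilbert symbol. First consider $U_a := \{x\in K_v^\times \mid (a,x)_v = 1\}$. By the bilinearity of the Hilbert symbol on $K_v^\times/K_v^{\times 2}$, this is a subgroup of $K_v^\times$, and it contains $K_v^{\times 2}$ since $(a,y^2)_v = (a,y)_v^2 = 1$. Therefore $U_a$ is a union of cosets of $K_v^{\times 2}$ in $K_v^\times$; since $K_v^{\times 2}$ is open by Lemma \ref{lemma openness for K^2 and O_K cross}, each such coset is open (translation is a homeomorphism of $K_v^\times$, and $K_v^\times$ is open in $K_v$ by the same lemma), hence $U_a$ is open in $K_v$. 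It is nonempty because $1\in U_a$.

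For the second set, intersect with $\Ocal_{K_v}$: since $\Ocal_{K_v}$ is open in $K_v$ (it is the valuation ring of a nonarchimedean place), the intersection $U_a\cap\Ocal_{K_v}$ is open; it is nonempty because it contains $1$. For the third set, $U_a\cap\Ocal_{K_v}^\times$, note that $\Ocal_{K_v}^\times$ is open in $K_v$ by Lemma \ref{lemma openness for K^2 and O_K cross}, so the intersection with the open set $U_a$ is again open, and it contains $1$, hence is nonempty. This completes the argument.

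I expect no serious obstacle here; the only point requiring a little care is the justification that a subgroup containing an open subgroup is itself open, which follows formally from the fact that a topological group in which the identity has an open neighborhood consisting of a subgroup has all cosets of that subgroup open. Since the paper has already established that $K_v^{\times 2}$ is an open subgroup, invoking bilinearity of the Hilbert symbol to write $U_a$ as a union of its cosets is the whole content of the proof.
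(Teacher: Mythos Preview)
Your proof is correct and follows essentially the same approach as the paper: both show nonemptiness via $1\in U_a$, and openness by writing $U_a$ as a union of cosets of the open subgroup $K_v^{\times 2}$ (invoking Lemma~\ref{lemma openness for K^2 and O_K cross}), then intersecting with the open sets $\Ocal_{K_v}$ and $\Ocal_{K_v}^\times$. Your write-up is somewhat more detailed in justifying why a union of cosets of an open subgroup is open, but the argument is the same.
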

	
	\begin{proof}
		For the unit $1$ belongs to these sets, they are nonempty. By Lemma \ref{lemma openness for K^2 and O_K cross}, the sets $K_v^{\times 2}$ and $\Ocal_{K_v}^\times$ are nonempty open subsets of $K_v.$ The set $\{x\in K_v^\times|(a,x)_v=1\}$ is a union of cosets of $K_v^{\times 2}$ in the group $K_v^\times.$  So the sets are open in $K_v.$
	\end{proof}

	\begin{lemma}\label{lemma openness for hilbert symbal -1}
		Let $v\in \Omega_K^f.$ For any $a\in K_v^\times,$ the sets $\{x\in K_v^\times|(a,x)_v=-1\}$ and  $\{x\in K_v^\times|(a,x)_v=-1\}\cap \Ocal_{K_v}$ are open subsets of $K_v.$ Furthermore, if $a\notin K_v^{\times 2},$ then they are nonempty. 
	\end{lemma}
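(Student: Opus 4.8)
The plan is to follow the template of the proof of Lemma \ref{lemma openness for hilbert symbal 1}. For the openness claims, I would first note that the Hilbert symbol $(a,-)_v$ factors through $K_v^\times/K_v^{\times 2}$, so the set $\{x\in K_v^\times\mid (a,x)_v=-1\}$ is a union of cosets of the subgroup $K_v^{\times 2}$ in $K_v^\times$. By Lemma \ref{lemma openness for K^2 and O_K cross}, $K_v^{\times 2}$ is an open subgroup of $K_v^\times$, and since $K_v^\times$ is itself open in $K_v$, each such coset is open in $K_v$; hence their union is open in $K_v$. Intersecting with the open subset $\Ocal_{K_v}$ (open by Lemma \ref{lemma openness for K^2 and O_K cross}) preserves openness, which settles the second set.

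For nonemptiness under the hypothesis $a\notin K_v^{\times 2}$, I would invoke the nondegeneracy of the Hilbert pairing on $K_v^\times/K_v^{\times 2}$ (\cite[Chapter XIV, Corollary 7]{Se79}): since $a$ represents a nontrivial class, there exists $y\in K_v^\times$ with $(a,y)_v=-1$. To force such an element into $\Ocal_{K_v}$, I would fix a prime element $\pi_v\in K_v$ and choose $n\in\ZZ$ with $v(y)+2n\geq 0$; then $x:=y\pi_v^{2n}$ lies in $K_v^\times\cap\Ocal_{K_v}$, and since $\pi_v^{2n}\in K_v^{\times 2}$ we get $(a,x)_v=(a,y)_v=-1$. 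Thus both sets are nonempty.

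There is no real obstacle here; the only points requiring mild care are the distinction between being open in $K_v^\times$ versus in $K_v$, and the replacement of $y$ by $y\pi_v^{2n}$ in order to land in the ring of integers without changing the value of the Hilbert symbol.
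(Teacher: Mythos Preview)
Your proof is correct and follows essentially the same approach as the paper: openness via cosets of $K_v^{\times 2}$ (Lemma \ref{lemma openness for K^2 and O_K cross}), and nonemptiness via nondegeneracy of the Hilbert pairing together with multiplication by a square to land in $\Ocal_{K_v}$. One tiny remark: Lemma \ref{lemma openness for K^2 and O_K cross} asserts openness of $\Ocal_{K_v}^\times$, not of $\Ocal_{K_v}$; the latter is open simply because it is a $v$-adic ball, which is how the paper justifies it as well.
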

	
	\begin{proof}
		If the set $\{x\in K_v^\times|(a,x)_v=-1\}\neq \emptyset,$ then it is a union of cosets of $K_v^{\times 2}$ in the group $K_v^\times.$  By Lemma \ref{lemma openness for K^2 and O_K cross}, it is an open subset of $K_v.$ For $\Ocal_{K_v}$ is open in $K_v,$ the sets are open subsets of $K_v^\times.$  Nonemptiness is from the nondegeneracy of the bilinear form given by the Hilbert symbol, and from multiplying a square element in $\Ocal_{K_v}$ to denominate an element in $K_v^\times.$
	\end{proof}

	\begin{lemma}\label{lemma openness for hilbert symbal with odd valuation element}
		Let $v\in \Omega_K^f.$ For any $a\in K_v^\times$ with $v(a)$ odd, the set $\{x\in \Ocal_{K_v}^\times|(a,x)_v=-1\}$ is a nonempty open subset of $K_v.$
	\end{lemma}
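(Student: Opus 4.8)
The openness is immediate and requires nothing new: one has
\[
\{x\in\Ocal_{K_v}^\times\mid (a,x)_v=-1\}=\Ocal_{K_v}^\times\cap\{x\in K_v^\times\mid (a,x)_v=-1\},
\]
and both factors on the right are open subsets of $K_v$ by Lemma \ref{lemma openness for K^2 and O_K cross} and Lemma \ref{lemma openness for hilbert symbal -1}. So the substance of the statement is nonemptiness, and the plan is to prove it by a short counting argument with characters of $K_v^\times/K_v^{\times2}$, uniformly in the residue characteristic.

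Since $v(a)$ is odd, $a\notin K_v^{\times2}$, so by nondegeneracy of the Hilbert pairing the homomorphism $\chi\colon K_v^\times\to\{\pm1\}$, $y\mapsto(a,y)_v$, is nontrivial; thus $\ker\chi$ contains $K_v^{\times2}$ and has index $2$ in $K_v^\times$. I would then argue by contradiction: if $(a,x)_v=1$ for every $x\in\Ocal_{K_v}^\times$, then $\Ocal_{K_v}^\times\subseteq\ker\chi$, hence $\Ocal_{K_v}^\times K_v^{\times2}\subseteq\ker\chi$. But $\Ocal_{K_v}^\times K_v^{\times2}=\{y\in K_v^\times\mid v(y)\in 2\ZZ\}$ already has index $2$ in $K_v^\times$ (the quotient map being $y\mapsto v(y)\bmod 2$), so these two index-$2$ subgroups coincide: $\ker\chi=\{y\mid v(y)\text{ even}\}$. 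Equivalently, $(a,y)_v=1$ precisely when $v(y)$ is even.

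To finish, I would test this characterization on two explicit elements. Taking $y=-1$, which has valuation $0$, gives $(a,-1)_v=1$. On the other hand, the conic $x_0^2-ax_1^2+ax_2^2=0$ has the $K_v$-point $(0:1:1)$, so $(a,-a)_v=1$, and hence $(a,a)_v=(a,-a)_v(a,-1)_v=1$; but $v(a)$ is odd, so the characterization forces $(a,a)_v=-1$. This contradiction shows that some $x\in\Ocal_{K_v}^\times$ satisfies $(a,x)_v=-1$, completing the proof.

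The only point needing a little care is the bookkeeping that $\Ocal_{K_v}^\times K_v^{\times2}$ equals the even-valuation subgroup and has index exactly $2$; after that everything is forced by nondegeneracy of the Hilbert symbol together with the elementary identity $(a,-a)_v=1$. In particular there is no separate $2$-adic obstacle, which is why I would avoid splitting into cases on the residue characteristic.
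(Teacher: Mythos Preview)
Your proof is correct. The openness part is identical to the paper's, and your nonemptiness argument is valid: the index computations are fine, and the final contradiction via $(a,-a)_v=1$ and $(a,-1)_v=1$ versus $v(a)$ odd is clean.

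The paper takes a more direct route. It starts from nondegeneracy to pick some $b\in K_v^\times$ with $(a,b)_v=-1$, then adjusts: if $v(b)$ is odd, replace $b$ by $-ab$, using $(a,-a)_v=1$ to preserve $(a,b)_v=-1$ while making the valuation even (since $v(a)$ is odd). Finally multiply by $\pi_v^{-v(b)}\in K_v^{\times2}$ to land in $\Ocal_{K_v}^\times$. So the paper produces an explicit witness, whereas you argue by contradiction through the group structure of $K_v^\times/K_v^{\times2}$. Both arguments hinge on the same identity $(a,-a)_v=1$; the paper's version is a one-step construction, while yours trades that concreteness for a structural explanation of why the even-valuation subgroup cannot absorb all of $\ker\chi$. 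Neither needs a case split on residue characteristic, so your remark at the end is on point for both.
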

	
	\begin{proof}
		By Lemmas \ref{lemma openness for K^2 and O_K cross} and \ref{lemma openness for hilbert symbal -1}, the set is open in $K_v.$ We need to show that it is nonempty. For $a\notin K_v^2,$ by the nondegeneracy of the bilinear form given by the Hilbert symbol, there exists an element $b\in K_v^\times$ such that $(a,b)_v=-1.$ If $v(b)$ is odd, let $b'=-ab.$ Then $(a,b')_v=(a,-ab)_v=(a,-a)_v(a,b)_v=-1.$ Replacing $b$ by $b'$ if necessary, we can assume that $v(b)$ is even. Choose a prime element $\pi_v\in K_v.$ Then $\pi_v^{-v(b)}\in K_v^{\times 2},$ so the element $b\pi_v^{-v(b)}$ is in this set.
	\end{proof}

	\subsection{Brauer-Manin obstruction}
	Cohomological obstructions have been used to explain failures of the Hasse principle and nondensity of $X(K)$ in $X(\AA_K^S).$  Let $\Br(X)=H^2_{\et}(X,\GG_m)$ be the Brauer group of $X.$ Let $\inv_v\colon \Br(K_v)\to \QQ/\ZZ$ be the local invariant map. The Brauer-Manin pairing
	$$X(\AA_K)\times\Br(X)\to \QQ/\ZZ,$$ 
	suggested by Manin \cite{Ma71}, between $X(\AA_K)$ and $\Br(X),$ is provided by local class field theory. The left kernel of this pairing is denoted by $X(\AA_K)^{\Br},$ which is a closed subset of $X(\AA_K).$ By the global reciprocity in class field theory, there is an exact sequence: 
	$$0\to \Br(K)\to \bigoplus\limits_{v\in \Omega_K} \Br(K_v)\to \QQ/\ZZ\to 0,$$
	which induces an inclusion: $X(K)\subset pr^S(X(\AA_K)^\Br).$ 
	\begin{remark}
			For any smooth, proper and rationally connected variety $X$ defined over a number field $K,$ it is conjectured by Colliot-Th\'el\`ene \cite{CT03} that the $K$-rational points set $X(K)$ is dense in  $X(\AA_K)^\Br.$  Colliot-Th\'el\`ene's conjecture holds for Ch\^atelet surfaces, cf. \cite{CTSSD87a,CTSSD87b}.
	\end{remark}

	\section{Ch\^atelet surfaces}\label{section: Ch\^atelet surfaces and choose an element a for S}

	Let $K$ be a number field. Given an equation (\ref{equation}), let $V^0$ be the affine surface in $\AA^3_K$ defined by this equation. Let $V$ be the natural smooth compactification  of $V^0$ given in \cite[Section 7.1]{Sk01}, which is called the Ch\^atelet surface given by this equation, cf. \cite[Section 5]{Po09}. Notice that all smooth projective models of a given equation (\ref{equation}) are the same as to the discussion of the Hasse principle and weak approximation.
	
	\begin{remark}\label{remark birational to PP^2}
		For any local field $K_v,$  if $a\in K_v^{\times 2},$ then $V$ is birationally equivalent to $\PP^2$ over $K_v.$ By the implicit function theorem, there exists a $K_v$-point on $V.$
	\end{remark}
	
	\begin{remark}\label{remark the implicit function thm and local constant Brauer group}
		For any local field $K_v,$ by smoothness of $V,$ the implicit function theorem implies that the nonemptiness of $V^0(K_v)$ is equivalent to the nonemptiness of $V(K_v),$ and that $V^0(K_v)$ is open dense in $V(K_v)$ with the $v$-adic topology. Given an element $A\in\Br(V),$ the evaluation of $A$ on $V(K_v)$ is locally constant. By the properness of $V,$ the space $V(K_v)$ is compact. So the set of all possible values of the evaluation of $A$ on $V(K_v)$ is finite. Indeed, by \cite[Proposition 7.1.2]{Sk01}, there exist only two possible values. They are determined by the evaluation of $A$ on $V^0(K_v).$ In particular, if the evaluation of $A$ on $V^0(K_v)$ is constant, then it is constant on $V(K_v).$ 
	\end{remark}

		In the following two sections, we will construct two kinds of Ch\^atelet surfaces. 
		
			Given an extension of number fields $L/K,$
		and a finite subset $S\subset \Omega_K\backslash (\infty_K^c\cup 2_K),$
	we always use the following way to choose an element $a\in \Ocal_K\backslash K^2$ for the parameter $a$ in the equation (\ref{equation}).

	If $S=\emptyset,$ by Theorem \ref{theorem Chebotarev density theorem}, we can take a place $v_0\in \Omega_K^f\backslash 2_K$ splitting completely in $L.$ Then replace $S$ by $\{v_0\}$ to continue the following step.
	
	Now, suppose that $S\neq \emptyset.$ For $v\in \Omega_K,$ by Lemma \ref{lemma openness for K^2 and O_K cross}, the set $K_v^{\times 2}$ is a nonempty open subset of $K_v.$ For $v\in \Omega_K^f,$ by Lemma \ref{lemma openness for v(x)=n}, the set  $\{a\in K_v| v(a)$ is odd$\}$ is a nonempty open subset of $K_v.$  Using weak approximation for the affine line $\AA^1,$ we can choose an element $a\in K^\times$ satisfying the following conditions:
	\begin{itemize}
		\item $\tau_v(a)<0$ for all $v\in S\cap \infty_K,$
		\item $a\in K_v^{\times 2}$  for all $v\in 2_K,$
		\item $v(a)$ is odd for all $v\in S\backslash \infty_K.$
	\end{itemize}
	These conditions do not change by multiplying an element in $K^{\times 2},$ so we can assume $a\in \Ocal_K.$ The conditions that $v(a)$ is odd for all $v\in S\backslash \infty_K,$ and that $\tau_v(a)<0$ for all $v\in S\cap \infty_K,$ imply $a\in \Ocal_K\backslash K_v^2$ for all $v\in S.$ So $a\in \Ocal_K\backslash K^2.$ 
	
	\begin{remark}\label{remark choose an element a for S remark 1}
		Let $S'=\{v\in \infty_K^r | \tau_v(a)<0\}\cup \{v\in \Omega_K^f\backslash 2_K | v(a){\rm ~is ~odd} \},$ then $S'$ is a finite set. By the conditions that $\tau_v(a)<0$ for all $v\in S\cap \infty_K,$ and that $v(a)$ is odd for all $v\in S\backslash \infty_K,$ we have $S'\supset S.$ Then $S'\neq \emptyset.$
	\end{remark}
	
	\begin{remark}\label{remark choose an element a for S 2}
		If there exists one place in $S$ splitting completely in $L$ or $S=\emptyset,$ then by the choice of $a,$ the element $a\in \Ocal_K\backslash L^2.$		
	\end{remark}

	\section{Weak approximation under extensions of base fields}

	In the paper \cite{Li18}, Liang  study the non-invariance of weak approximation under an extension of base fields. More precisely, for any number field $K,$ Liang \cite[Proposition 3.4]{Li18} proved that there exist a Ch\^atelet surface $V$ over $K$ and a quadratic
	extension $L/K$	 such that $V(K)\neq \emptyset,$ $V$ satisfies weak approximation, but the
	base extension $V_L$ does not satisfies weak approximation, even off $\infty_L.$ In this section, we generalize Liang's construction and obtain	further results that apply to an arbitrary extension $L/K.$

	\subsection{Choice of parameters for the equation (\ref{equation})}\label{subsection Choice of parameters for V2}
	By choosing $a\in \Ocal_K\backslash K^2$ as in Section \ref{section: Ch\^atelet surfaces and choose an element a for S}, we  choose an element $b\in K^\times$ in the following way.
	
	Let $S'=\{v\in \infty_K^r | \tau_v(a)<0\}\cup \{v\in \Omega_K^f\backslash 2_K | v(a){\rm ~is ~odd} \}$ be as in Remark \ref{remark choose an element a for S remark 1}, then $S'\supset S$ is a finite set. 
	By Lemma \ref{lemma openness for v(x)=n}, for $v\in S\backslash \infty_K,$ the set $\{b\in K_v|v(b)=-v(a)\}$ is a nonempty open subset of $K_v;$ for $v\in S'\backslash (S\cup \infty_K),$ the set $\{b\in K_v|v(b)=v(a)\}$ is a nonempty open subset of $\Ocal_{K_v}.$ By Lemma \ref{lemma strong approximation for A^1}, we can choose a nonzero element $b\in \Ocal_S[1/2]$ satisfying the following conditions:
	\begin{itemize}
		\item $v(b)=-v(a)$ for all $v\in S\backslash \infty_K,$
		\item $v(b)=v(a)$  for all $v\in S'\backslash (S\cup \infty_K).$
	\end{itemize}	
	
	We  choose an element $c\in K^\times$ with respect to the chosen $a,b$ in the following way.

	Let $S''=\{v\in \Omega_K^f\backslash 2_K| v(b)\neq 0 \},$ then $S''$ is a finite set and $S'\backslash \infty_K \subset S''.$ 
	By Theorem \ref{theorem Chebotarev density theorem}, we can take two different finite places $v_1,v_2\in \Omega_K^f\backslash  S''$ splitting completely in $L.$ 	
	If $v \in S\backslash \infty_K,$ then $v(a)$ is odd. In this case, by Lemma \ref{lemma openness for hilbert symbal with odd valuation element}, the set  $\{c\in \Ocal_{K_v}^\times|(a,c)_v=-1\}$ is a nonempty open subset of $\Ocal_{K_v}.$ 
	If $v\in \{v_1,v_2\},$ then $b\in \Ocal_{K_v}^\times.$ In this case, by Lemma \ref{lemma openness for v(x)=n}, the sets $\{c\in K_v|v(c)=1\}$ and $\{c\in K_v|v(1+cb^2)=1\}$ are nonempty open subsets of $\Ocal_{K_v}.$ Also by Lemma \ref{lemma strong approximation for A^1}, we can choose a nonzero element $c\in\Ocal_K[1/2]$ satisfying the following conditions:
	\begin{itemize}
		\item $\tau_v(1+cb^2)<0$ for all $v\in S\cap \infty_K,$
		\item $\tau_v(c)>0$ for all $v\in (S'\backslash S)\cap \infty_K,$
		\item $(a,c)_v=-1$ and $v(c)=0$ for all $v\in S\backslash \infty_K,$
		\item $v_1(c)=1$ and $v_2(1+cb^2)=1$ for the chosen $v_1,v_2$ above.
	\end{itemize}
	
	Let $P(x)=(cx^2+1)((1+cb^2)x^2+b^2),$ and let $V_1$ be the Ch\^atelet surface given by $y^2-az^2=(cx^2+1)((1+cb^2)x^2+b^2).$

	\begin{proposition}\label{proposition the valuation of Brauer group on local points are fixed outside S and take two value on S}
		For any extension of number fields $L/K,$ and any finite subset $S \subset \Omega_K\backslash (\infty_K^c\cup 2_K)$ splitting completely in $L,$
		there exists a Ch\^atelet surface $V_1$ defined over $K,$ which has the following properties.
		\begin{itemize}
			\item The Brauer group $\Br(V_1)/\Br(K)\cong\Br(V_{1L})/\Br(L)\cong \ZZ/2\ZZ,$ is generated by an element $A\in \Br(V_1).$ The subset $V_1(K)\subset V_1(L)$ is nonempty. 	
			\item For any $v\in S,$ there exist $P_v$ and $Q_v$ in $V_1(K_v)$ such that the local invariants $\inv_v(A(P_v))=0$ and $\inv_v(A(Q_v))=\half.$  For any other $v\notin S,$ and any $P_v\in V_1(K_v),$ the local invariant $\invap=0.$	
			\item For any $v'\in S_L,$ there exist $P_{v'}$ and $Q_{v'}$ in $V_1(L_{v'})$ such that the local invariants $\inv_{v'}(A(P_{v'}))=0$ and $\inv_{v'}(A(Q_{v'}))=\half.$  For any other $v'\notin S_L,$ and any $P_{v'}\in V_1(L_{v'}),$ the local invariant $\inv_{v'}(A(P_{v'})) =0.$
		\end{itemize}
	\end{proposition}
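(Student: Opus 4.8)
The plan is to deduce the Brauer-group assertion from the standard computation of the Brauer group of a Ch\^atelet surface, to exhibit an explicit $K$-point, and then to compute the evaluation of the quaternion class at each place. Throughout, write $P_1(x)=cx^2+1$ and $P_2(x)=(1+cb^2)x^2+b^2$, so that $P=P_1P_2$, and note the identity $\mathrm{Res}(P_1,P_2)=\bigl(cb^2-(1+cb^2)\bigr)^2=1$. First I would record the algebraic facts. The conditions $v_1(c)=1$ and $v_2(1+cb^2)=1$, together with $v_1(b)=v_2(b)=0$ (as $v_1,v_2\notin S''$), show that $-c$, resp.\ $-(1+cb^2)$, is a non-square in $K_{v_1}$, resp.\ $K_{v_2}$, so $P_1$ and $P_2$ are irreducible over $K$; since $v_1,v_2$ split completely in $L$, they remain irreducible over $L$. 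Using in addition that $v_1(a),v_2(a)$ are even (as $v_1,v_2\notin S'$), the valuations $v_1(-ac)$, $v_1\bigl(c(1+cb^2)\bigr)$ and $v_2\bigl(-a(1+cb^2)\bigr)$ are all odd, so the three quadratic extensions $K(\sqrt a)$, $K(\sqrt{-c})$, $K\bigl(\sqrt{-(1+cb^2)}\bigr)$ are pairwise distinct, and the same holds over $L$. By the standard description of the Brauer group of a Ch\^atelet surface (\cite{CTSSD87a,CTSSD87b}; see also \cite[Section 7.1]{Sk01}), it then follows that $\Br(V_1)/\Br(K)$ and $\Br(V_{1L})/\Br(L)$ are cyclic of order $2$, generated by the class $A$ of the quaternion algebra $(a,P_1(x))$, equivalently $(a,P_2(x))$ since $(a,P(x))=0$ on $V_1^0$, and that restriction identifies the two. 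Since $P(0)=b^2$, the point $(0,b,0)$ lies on $V_1^0$, so $V_1(K)\neq\emptyset$, and hence $V_1(L)\neq\emptyset$.

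Next I would compute $\inv_v(A(P_v))$ for $v\notin S$. By Remark~\ref{remark the implicit function thm and local constant Brauer group} it suffices to treat $P_v\in V_1^0(K_v)$, where $A=(a,P_1(x))$ and where $(a,P_1(x_0))_v(a,P_2(x_0))_v=(a,P(x_0))_v=1$ because $P(x_0)=y_0^2-az_0^2$ is a value of the norm form of $K_v(\sqrt a)/K_v$. If $a\in K_v^{\times2}$ --- in particular for $v\in 2_K$ by the choice of $a$, and for real $v$ with $\tau_v(a)>0$ --- the class $A$ is trivial over $K_v$. If $v$ is real with $\tau_v(a)<0$, then $v\in S'\setminus S$, so $\tau_v(c)>0$, hence $P_1$ is positive on $\RR$ and $(a,P_1(x_0))_v=1$. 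If $v$ is a finite odd place with $a\notin K_v^{\times2}$ and $v(a)$ even, then $K_v(\sqrt a)/K_v$ is unramified, and since $\mathrm{Res}(P_1,P_2)=1$ at least one of $P_1(x_0),P_2(x_0)$ is a $v$-adic unit (poles being handled by the substitution $x\mapsto 1/x$); such a unit has trivial Hilbert symbol with $a$ by Lemma~\ref{lemma hilbert symbal lifting for odd prime}, and then the displayed identity forces $(a,P_1(x_0))_v=1$. The last case is $v$ finite odd with $v(a)$ odd and $v\notin S$: then $v\in S'\setminus S$, so by the choice of $b$ one has $v(b)=v(a)$, whence $v(1+cb^2)=0$ with $1+cb^2\equiv1$ and $v(b^2)>0$, so the reduction of $P_2$ modulo $v$ equals $x^2$; consequently $P_2(x_0)$ is a square in $K_v$ whenever $x_0$ has nonzero reduction, while $P_1(x_0)\equiv1$ is a square whenever $v(x_0)>0$ (poles again via $x\mapsto 1/x$), so in every case one of $P_1(x_0),P_2(x_0)$ is a square and the displayed identity again gives $(a,P_1(x_0))_v=1$. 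Thus $\inv_v(A(P_v))=0$ for every $v\notin S$ and every $P_v\in V_1(K_v)$.

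For $v\in S$ I would exhibit two points. The point $P_v=(0,b,0)$ has $(a,P_1(0))_v=(a,1)_v=1$, so $\inv_v(A(P_v))=0$. A point $Q_v$ with $\inv_v(A(Q_v))=\half$ is found as follows. If $v$ is archimedean, it is real with $\tau_v(a)<0$ and $\tau_v(1+cb^2)<0$, hence $\tau_v(c)<0$; for $|x_0|$ large one gets $P_1(x_0),P_2(x_0)<0$, so $P(x_0)>0$ is a norm from $K_v(\sqrt a)=\CC$ and $(a,P_1(x_0))_v=-1$. If $v$ is finite, it is odd with $v(a)$ odd, $v(c)=0$, $v(b)=-v(a)$ and $(a,c)_v=-1$; for $v(x_0)<0$ one gets $P_1(x_0)\equiv c$ and $P_2(x_0)\equiv 1+cb^2\equiv c$ modulo $K_v^{\times2}$, so $P(x_0)\equiv c^2$ is a square, hence a norm, giving a point $Q_v\in V_1^0(K_v)$ with $(a,P_1(x_0))_v=(a,c)_v=-1$, i.e.\ $\inv_v(A(Q_v))=\half$. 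By Remark~\ref{remark the implicit function thm and local constant Brauer group} the evaluation of $A$ on $V_1(K_v)$ takes at most two values, so it takes precisely the values $0$ and $\half$. This proves the second bullet.

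The third bullet follows by running the same computation over $L$. If $v'\in S_L$ lies over $v\in S$, then $v$ splits completely in $L$, so $L_{v'}=K_v$ and the points $P_v,Q_v$ constructed above serve as $P_{v'},Q_{v'}$. If $v'\notin S_L$, then $v=v'|_K\notin S$, and the case analysis above carries over to $L_{v'}$ word for word. I expect the main obstacle to be the treatment of the finite odd places in $S'\setminus S$, where $a$ has odd valuation but the place is not in $S$; over $L$ the key point is that a finite odd place $v'$ with $v'(a)$ odd forces $v=v'|_K$ to have $v(a)$ odd, hence $v\in S'\setminus S$ and $v(b)=v(a)$, which is exactly what makes the reduction of $P_2$ equal to $x^2$ over $\FF_{v'}$ and lets the argument close. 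The identity $\mathrm{Res}(P_1,P_2)=1$ is what removes all potential ``bad reduction'' exceptions among the remaining finite places, and the Brauer-group computation is standard.
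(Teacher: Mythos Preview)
Your proof is correct and follows the same overall architecture as the paper's: the same construction of $V_1$, the same generator $A=(a,cx^2+1)$ of $\Br(V_1)/\Br(K)$, the same rational point $(0,b,0)$, and the same case split on places. The one genuine difference is your use of the identity $\mathrm{Res}(P_1,P_2)=1$. For finite odd places with $v(a)$ even, the paper argues by contradiction: if both Hilbert symbols were $-1$, then $v(cx^2+1)$ would be odd, forcing $v(x)\le 0$, and then one checks by hand that $1+b^2(c+x^{-2})$ is a square via Hensel. Your resultant observation replaces this with the cleaner statement that $P_1(x_0)$ and $P_2(x_0)$ cannot simultaneously lie in the maximal ideal (for integral $x_0$, with poles handled by the reciprocal substitution), so one of them is a unit and hence has trivial symbol with the unramified $a$. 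Similarly, for finite places in $S'\setminus S$ the paper again argues by contradiction using Lemma~\ref{lemma Hensel lemma for Hilbert symbal}, whereas you observe directly that $P_2$ reduces to $x^2$ modulo $v$ (since $v(b)=v(a)>0$ and $1+cb^2\equiv 1$), so $P_2(x_0)$ is a square whenever $v(x_0)\le 0$ and $P_1(x_0)\equiv 1$ when $v(x_0)>0$. Both approaches are equivalent in strength; yours is a bit more conceptual and makes the ``good reduction'' structure transparent, while the paper's is more hands-on but self-contained without invoking resultants. Your treatment over $L$ is also sound; the point you flag---that $v'(a)$ odd forces $v(a)$ odd, hence $v\in S'\setminus S$ and $v(b)=v(a)$---is exactly what is needed, and the paper handles this by simply saying the local computation is the same as over $K$.
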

	
	\begin{proof}
		For the extension $L/K,$ and the finite set $S,$ we  check that the Ch\^atelet surface $V_1$ chosen as in Subsection \ref{subsection Choice of parameters for V2}, has the properties.
			
		By the choice of the places $v_1,$ the polynomial $x^2+c$ is an Eisenstein polynomial, so it is irreducible over $K_{v_1}.$ Since $v_1(a)$ is even, we have $K(\sqrt{a})K_{v_1}\ncong K_{v_1}[x]/(cx^2+1).$ So $K(\sqrt{a})\ncong K[x]/(cx^2+1).$ The same argument holds for the place $v_2$ and polynomial $(1+cb^2)x^2+b^2.$	
		For all places of $S$ split completely in $L,$ then by Remark \ref{remark choose an element a for S 2}, we have $a\in \Ocal_K\backslash L^2.$ 
		By the splitting condition of $v_1, v_2,$ we have $L(\sqrt{a})\ncong L[x]/(cx^2+1)$ and $L(\sqrt{a})\ncong L[x]/((1+cb^2)x^2+b^2).$ So $P(x)=(cx^2+1)((1+cb^2)x^2+b^2)$ is separable and a product of two degree-2 irreducible factors over $K$ and $L.$ 
		According to \cite[Proposition 7.1.1]{Sk01}, the Brauer group $\Br(V_1)/\Br(K)\cong\Br(V_{1L})/\Br(L)\cong \ZZ/2\ZZ.$ Furthermore, by Proposition 7.1.2 in loc. cit, we take the quaternion algebra $A=(a,cx^2+1)\in \Br(V_1)$ as a generator element of this group. Then we have the equality $A=(a,cx^2+1)=(a,(1+cb^2)x^2+b^2)$ in $\Br(V_1).$ 
		
		For $(x,y,z)=(0,b,0)$ is a rational point on $V_1^0,$ the set $V_1(K)$ is nonempty. We denote this rational point by $Q_0.$
		
		We need to compute the evaluation of $A$ on $V_1(K_v)$ for all $v\in \Omega_K.$ 
		
		For any $v\in \Omega_K,$ the local invariant $\invaqo=0.$ By Remark \ref{remark the implicit function thm and local constant Brauer group}, it suffices to compute the local invariant $\invap$ for all $P_v\in V_1^0(K_v).$
		
		\begin{enumerate}
		\item Suppose that $v\in (\infty_K\backslash S')\cup 2_K.$  Then $a\in K_v^{\times 2},$ so $\invap=0$ for all $P_v\in V_1(K_v).$
		\item Suppose that $v\in (S'\backslash S)\cap \infty_K.$ For any $x\in K,$ by the choice of $c,$ we have $\tau_v(cx^2+1)>0.$ Then $(a,cx^2+1)_v=1,$ so $\invap=0$ for all $P_v\in V_1(K_v).$
		\item Suppose that $v\in S'\backslash (S\cup \infty_K).$ Take an arbitrary $P_v\in V_1^0(K_v).$ If $\invap=1/2,$ then $(a,cx^2+1)_v=-1=(a,(1+cb^2)x^2+b^2)_v$ at $P_v.$ By Lemma \ref{lemma Hensel lemma for Hilbert symbal}, the first equality implies $v(x)\leq 0.$ Since $v(a)=v(b)>0$ and $v(c)\geq 0,$ by Lemma \ref{lemma Hensel lemma for Hilbert symbal}, we have  $(a,(1+cb^2)x^2+b^2)_v=(a,x^2)_v=1,$ which is a contradiction. So $\invap=0.$
		\item Suppose that $v\in \Omega_K^f\backslash (S'\cup 2_K ).$ Take an arbitrary $P_v\in V_1^0(K_v).$ If $\invap=1/2,$ then $(a,cx^2+1)_v=-1=(a,(1+cb^2)x^2+b^2)_v$ at $P_v.$ Since $v(a)$ is even, by Lemma \ref{lemma hilbert symbal lifting for odd prime}, the first equality implies that $v(cx^2+1)$ is odd. Since $c\in\Ocal_K[1/2],$ we have $v(x)\leq 0.$ So $v(c+x^{-2})$ is odd and positive. Since $v(b)\geq 0,$ by Hensel's lemma, we have $1+b^2(c+x^{-2})\in K_v^{\times 2}.$ So $(a,(1+cb^2)x^2+b^2)_v=(a,x^2)_v(a,1+b^2(c+x^{-2}))_v=1,$ which is a contradiction. So $\invap=0.$		
		\item Suppose that $v\in S\cap\infty_K.$ Take $P_v=Q_0,$ then $\invap=0.$ By the choice of $b,c,$ we have $\tau_v(\frac{b^2}{-cb^2-1})>\tau_v(\frac{1}{-c})>0.$ Take $x_0\in K$ such that $\tau_v(x_0)>\sqrt{\tau_v(\frac{b^2}{-cb^2-1})},$ then $\tau_v((cx_0^2+1)((1+cb^2)x_0^2+b^2))>0$ and $\tau_v(cx_0^2+1)<0.$ So there exists a $Q_v\in V_1^0(K_v)$ with $x=x_0.$ Then $\invaq=\half.$
		\item Suppose that $v\in S\backslash\infty_K.$ Take $P_v=Q_0,$ then $\invap=0.$ Take $x_0\in K_v$ such that  $v(x_0)< 0.$ Since $v(b)=-v(a)<0$ and $v(c)=0,$ by Lemma \ref{lemma Hensel lemma for Hilbert symbal}, we have $(a,cx_0^2+1)_v=(a,cx_0^2)_v=(a,c)_v$ and $(a,(1+cb^2)x_0^2+b^2)_v=(a,cb^2x_0^2)_v=(a,c)_v.$ So
		$(a,(cx_0^2+1)((1+cb^2)x_0^2+b^2))_v=(a,c)_v(a,c)_v=1.$ Hence, there exists a $Q_v\in V_1^0(K_v)$ with $x=x_0.$ Since $(a,c)_v=-1,$ we have $\invaq=\half.$ 	
	\end{enumerate}

		Finally, we need to compute the evaluation of $A$ on $V_1(L_{v'})$ for all $v'\in \Omega_L.$
		
		For any $v'\in \Omega_L,$ the local invariant $\inv_{v'}(A(Q_0))=0.$
		\begin{enumerate}
		\item Suppose that $v' \in S_L.$ Let $v\in \Omega_K$ be the restriction of $v'$ on $K.$ By the assumption that $v$ splits completely in $L,$ we have $K_v=L_{v'}.$ So $V_1(K_v)=V_1(L_{v'}).$	By the argument already shown, there exist $P_v, Q_v\in V_1(K_v)$ such that $\inv_v(A(P_v))=0$ and $\inv_v(A(Q_v))=\half.$ View $P_v, Q_v$ as elements in $V_1(L_{v'}),$ and let $P_{v'}=P_v$ and $Q_{v'}=Q_v.$ Then $\inv_{v'}(A(P_{v'}))=\invap=0$ and $\inv_{v'}(A(Q_{v'}))=\invaq=\half.$ 
		\item Suppose that $v'\in \Omega_L\backslash S_L.$ This local computation is the same as the case $v\in \Omega_K\backslash S.$	
	\end{enumerate}
	\end{proof}

	\begin{remark}\label{remark union of the local invariant 0 and one-half}
		For any $v\in S,$ and any $P_v\in V_1(K_v),$  the local invariant of the evaluation of $A$ on $P_v$ is $0$ or $\half.$ Let $U_1=\{P_v\in V_1(K_v)| \invap=0\}$ and $U_2=\{P_v\in V_1(K_v)| \invap=\half\}.$ Then $U_1$ and $U_2$ are nonempty disjoint open subsets of $V_1(K_v),$ and $V_1(K_v)=U_1\bigsqcup U_2.$
	\end{remark}

	Applying the global reciprocity law, the surface $V_1$ in Proposition \ref{proposition the valuation of Brauer group on local points are fixed outside S and take two value on S}, has the following weak approximation properties.
	
	\begin{proposition}\label{proposition the valuation of Brauer group on local points are fixed outside S and take two value on S property}
		Given an extension of number fields $L/K,$
		and a finite subset $S \subset \Omega_K\backslash (\infty_K^c\cup 2_K)$ splitting completely in $L,$ let $V_1$ be a Ch\^atelet surface satisfying those properties of Proposition \ref{proposition the valuation of Brauer group on local points are fixed outside S and take two value on S}.
			\begin{enumerate}
		 \item If $S=\emptyset,$ then $V_1$ and $V_{1L}$ satisfy weak approximation.  
		 \item If $S\neq \emptyset,$ then $V_1$ satisfies weak approximation off $S'$ for a finite subset $S'\subset \Omega_K$ if and only if  $S'\cap S\neq \emptyset.$
		 \item  If $S\neq \emptyset,$ the surface $V_{1L}$ satisfies weak approximation off $T$ for a finite subset $T\subset \Omega_L$ if and only if $T\cap S_L\neq \emptyset.$ 
		 \end{enumerate}
	\end{proposition}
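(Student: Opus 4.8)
The strategy is to reduce all three assertions to formal consequences of Proposition~\ref{proposition the valuation of Brauer group on local points are fixed outside S and take two value on S}, of the global reciprocity sequence recalled in the previous section, and of the theorem of Colliot-Th\'el\`ene, Sansuc and Swinnerton-Dyer that for Ch\^atelet surfaces the \BMo to weak approximation is the only one. Concretely, for a Ch\^atelet surface $X$ over a number field $F$ that theorem gives $\overline{X(F)}=X(\AA_F)^{\Br}$ inside $X(\AA_F)$; since $X$ is proper the space $X(\AA_F)$ is compact, so for a finite $S'\subset\Omega_F$ the image $pr^{S'}\bigl(X(\AA_F)^{\Br}\bigr)$ is a closed subset of $X(\AA_F^{S'})$ containing $pr^{S'}(X(F))$, whence $pr^{S'}\bigl(X(\AA_F)^{\Br}\bigr)=\overline{pr^{S'}(X(F))}$. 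Therefore $X$ satisfies weak approximation off $S'$ if and only if $pr^{S'}\colon X(\AA_F)^{\Br}\to X(\AA_F^{S'})$ is surjective. I will use this with $(F,X)=(K,V_1)$ and $(F,X)=(L,V_{1L})$.

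Next I would make $V_1(\AA_K)^{\Br}$ explicit. Since $\Br(V_1)/\Br(K)$ is generated by the class of $A$, every element of $\Br(V_1)$ differs from an integral multiple of $A$ by a class from $\Br(K)$, and such constant classes pair trivially with every adelic point by the reciprocity sequence; hence $(P_v)\in V_1(\AA_K)^{\Br}$ if and only if $\sum_{v\in\Omega_K}\inv_v(A(P_v))=0$ in $\QQ/\ZZ$. By Proposition~\ref{proposition the valuation of Brauer group on local points are fixed outside S and take two value on S} every summand with $v\notin S$ is zero and every summand with $v\in S$ belongs to $\{0,\half\}$, so $(P_v)$ lies in $V_1(\AA_K)^{\Br}$ exactly when the number of $v\in S$ with $\inv_v(A(P_v))=\half$ is even; the same holds over $L$ with $S_L$ in place of $S$. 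Part~(1) is now immediate: if $S=\emptyset$ the proposition forces every $\inv_v(A(P_v))$ and every $\inv_{v'}(A(P_{v'}))$ to vanish, so $V_1(\AA_K)^{\Br}=V_1(\AA_K)$ and $V_{1L}(\AA_L)^{\Br}=V_{1L}(\AA_L)$, and the first paragraph yields weak approximation for $V_1$ and for $V_{1L}$.

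For Parts~(2) and~(3) the argument over $K$ and over $L$ is the same, so I run it over $K$, assuming $S\neq\emptyset$, for a finite $S'\subset\Omega_K$. Suppose first $S'\cap S\neq\emptyset$ and fix $v^{\ast}\in S'\cap S$; given any point $(P_v)_{v\notin S'}\in V_1(\AA_K^{S'})$, its invariants outside $S'$ sum to some $\delta\in\{0,\half\}$ by the description above, and by Proposition~\ref{proposition the valuation of Brauer group on local points are fixed outside S and take two value on S} I may pick $P_v\in V_1(K_v)$ with $\inv_v(A(P_v))=0$ for $v\in S'\setminus\{v^{\ast}\}$ and $P_{v^{\ast}}\in V_1(K_{v^{\ast}})$ with $\inv_{v^{\ast}}(A(P_{v^{\ast}}))=\delta$, so that $\sum_{v}\inv_v(A(P_v))=2\delta=0$; this exhibits the given point as the image of a point of $V_1(\AA_K)^{\Br}$, so $pr^{S'}$ is surjective and weak approximation off $S'$ holds. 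Suppose instead $S'\cap S=\emptyset$; then $S\subset\Omega_K\setminus S'$ is nonempty, and fixing $v^{\ast}\in S$ the set $W\subset V_1(\AA_K^{S'})$ consisting of those $(P_v)_{v\notin S'}$ with $\inv_{v^{\ast}}(A(P_{v^{\ast}}))=\half$ and $\inv_v(A(P_v))=0$ for $v\in S\setminus\{v^{\ast}\}$ is, by Proposition~\ref{proposition the valuation of Brauer group on local points are fixed outside S and take two value on S} and Remark~\ref{remark union of the local invariant 0 and one-half}, a nonempty open subset; no $K$-point of $V_1$ maps into $W$, since by reciprocity the invariants of $A$ at such a point sum to $0$ over $S$ while on $W$ they sum to $\half$. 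Hence $V_1(K)$ is not dense in $V_1(\AA_K^{S'})$ and weak approximation off $S'$ fails. Replacing $(K,S)$ by $(L,S_L)$, which is again nonempty, gives Part~(3).

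I do not anticipate a serious obstacle: the essential content sits in Proposition~\ref{proposition the valuation of Brauer group on local points are fixed outside S and take two value on S}, and the rest is bookkeeping with the reciprocity law. The two points that need a sentence of care are the passage from ``$pr^{S'}$ has dense image'' to ``$pr^{S'}$ is surjective'' on the \BM set, which is where properness (compactness of $X(\AA_F)$) is invoked, and the fact that the loci $\{\,P_v:\inv_v(A(P_v))=0\,\}$ and $\{\,P_v:\inv_v(A(P_v))=\half\,\}$ are open, so that $W$ is open and the choices above are legitimate; both are furnished by Remarks~\ref{remark the implicit function thm and local constant Brauer group} and~\ref{remark union of the local invariant 0 and one-half}.
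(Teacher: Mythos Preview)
Your argument is correct and follows essentially the same route as the paper: both use the Colliot-Th\'el\`ene--Sansuc--Swinnerton-Dyer theorem to identify $\overline{V_1(K)}$ with $V_1(\AA_K)^{\Br}$, reduce to the single generator $A$, and then (i) when $S'\cap S\neq\emptyset$ adjust the local component at a chosen place of $S'\cap S$ to force the sum of invariants to vanish, (ii) when $S'\cap S=\emptyset$ exhibit an open set on which the sum is $\half$. Your packaging via ``$pr^{S'}$ is surjective on the Brauer--Manin set'' (using compactness) is a clean reformulation of the paper's direct verification on basic open sets, but the substance is identical.
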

	
	\begin{proof}
		
		According to \cite[Theorem B]{CTSSD87a,CTSSD87b}, the Brauer-Manin obstruction to the Hasse principle and weak approximation is the only one for Ch\^atelet surfaces, so $V_1(K)$ is dense in $V_1(\AA_K)^{\Br}.$
			\begin{enumerate}
		 \item Suppose that $S=\emptyset,$ then for any $(P_v)_{v\in\Omega_K}\in V_1(\AA_K),$ by Proposition \ref{proposition the valuation of Brauer group on local points are fixed outside S and take two value on S}, the sum $\sum_{v\in \Omega_K}\inv_v(A(P_v))=0.$ Since $\Br(V_1)/\Br(K)$ is generated by the element $A,$ we have $V_1(\AA_K)^{\Br}= V_1(\AA_K).$ So $V_1(K)$ is dense in $V_1(\AA_K)^{\Br}= V_1(\AA_K),$ i.e. the surface $V_1$ satisfies weak approximation.  
		\item \begin{enumerate}
			 \item	Suppose that $S'\cap S\neq \emptyset.$ Take $v_0\in S'\cap S.$ 
			For any finite subset $R\subset \Omega_K\backslash\{v_0\},$ take a nonempty open subset $M=V_1(K_{v_0})\times \prod_{v\in R}U_v\times \prod_{v\notin R \cup\{v_0\}}V_1(K_v)\subset V_1(\AA_K).$ Take an element $(P_v)_{v\in \Omega_K}\in M$ with $\inv_{v_0} A(P_{v_0})=0.$	 By Proposition \ref{proposition the valuation of Brauer group on local points are fixed outside S and take two value on S} and $v_0\in S,$ we can take an element $P_{v_0}'\in V_1(K_{v_0})$ such that $\inv_{v_0} A(P_{v_0}')=\half.$	
			By Proposition \ref{proposition the valuation of Brauer group on local points are fixed outside S and take two value on S}, the sum $\sum_{v\in \Omega_K\backslash \{v_0\}}\inv_v(A(P_v))$ is $0$ or $\half$ in $\QQ/\ZZ.$ If it is $\half,$ then we replace $P_{v_0}$ by $P_{v_0}'.$ In this way, we get a new element $(P_v)_{v\in \Omega_K}\in M.$  And the sum $\sum_{v\in \Omega_K}\inv_v(A(P_v))=0$ in $\QQ/\ZZ.$ So $(P_v)_{v\in \Omega_K}\in V_1(\AA_K)^{\Br}\cap M.$ Since $V_1(K)$ is dense in $V_1(\AA_K)^{\Br},$ the set $V_1(K)\cap M\neq \emptyset,$ which implies that $V_1$ satisfies weak approximation off $\{v_0\}.$ So $V_1$ satisfies weak approximation off $S'.$
			
		 \item	Suppose that $S\neq \emptyset$ and $S'\cap S= \emptyset.$ Take $v_0\in S,$ and let $U_{v_0}=\{P_{v_0}\in V_1(K_{v_0})| \inv_{v_0}(A(P_{v_0}))=\half\}.$ For $v\in S\backslash \{v_0\},$  let $U_v=\{P_v\in V_1(K_v)| \invap=0\}.$ For any $v\in S,$ by Remark \ref{remark union of the local invariant 0 and one-half}, the set $U_v$ is a nonempty  open subset of $V_1(K_v).$ Let $M=\prod_{v\in S}U_v\times \prod_{v\notin S }V_1(K_v).$ It is a nonempty  open subset of $V_1(\AA_K).$ For any $(P_v)_{v\in \Omega_K}\in M,$ by Proposition \ref{proposition the valuation of Brauer group on local points are fixed outside S and take two value on S} and the choice of $U_v,$ the sum $\sum_{v\in \Omega_K}\inv_v(A(P_v))=\half$ is nonzero in $\QQ/\ZZ.$ So $V_1(\AA_K)^{\Br}\cap M=\emptyset,$ which implies  $V_1(K)\cap M= \emptyset.$ Hence $V_1$ does not satisfy weak approximation off $S'.$
		\end{enumerate}
	 \item The same argument applies to $V_{1L}.$ 		
	\end{enumerate}
		
	\end{proof}

	From discussion of Proposition \ref{proposition the valuation of Brauer group on local points are fixed outside S and take two value on S property}, we have the following weak approximation properties for Ch\^atelet surfaces.

	\begin{thm}\label{thm: interesting result for Chatelet surface1}
		For any extension of number fields $L/K,$ and any finite nonempty subset $S \subset \Omega_K\backslash (\infty_K^c\cup 2_K)$ splitting completely in $L,$
		there exists a Ch\^atelet surface $V$ over $K$ with $V(K)\neq \emptyset,$ such that for every intermediate field $K\subset L'\subset L,$
		the Brauer group $\Br(V)/\Br(K)\cong\Br(V_{L'})/\Br(L')\cong \ZZ/2\ZZ.$ For every finite subset $T'\subset \Omega_{L'},$ the base extension $V_{L'}$ satisfies weak approximation off $T'$ if and only if  $T'\cap S_{L'}\neq \emptyset.$ In particular,  the surface $V_{L'}$ does not satisfy weak approximation for every $K\subset L'\subset L.$
	\end{thm}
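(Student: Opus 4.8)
The plan is to take $V$ to be exactly the Ch\^atelet surface $V_1$ constructed in Subsection~\ref{subsection Choice of parameters for V2} for the given extension $L/K$ and the given nonempty set $S$, and then to observe that this single surface already satisfies the hypotheses of Propositions~\ref{proposition the valuation of Brauer group on local points are fixed outside S and take two value on S} and~\ref{proposition the valuation of Brauer group on local points are fixed outside S and take two value on S property} with \emph{every} intermediate field $L'$ in place of $L$. Since $S\neq\emptyset$, the auxiliary place $v_0$ of the $S=\emptyset$ case is not needed, so the only data entering the construction are $a$ (depending on $S$ and $2_K$), $b$ (depending on $S$ and $S'$), and $c$ (depending on $S$, $S'$, and two finite places $v_1,v_2$ chosen to split completely in $L$); in particular the field $L$ intervenes only through the splitting condition on $v_1,v_2$.

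First I would record two elementary ``descent along sub-extensions'' facts. (i) If a place $v$ of $K$ splits completely in $L$, then it splits completely in every intermediate field $K\subset L'\subset L$: the decomposition group at a place of $L'$ above $v$ is a subquotient of the one at a place of $L$ above it, hence trivial. (ii) $L'^{\times 2}\cap K\subset L^{\times 2}\cap K$, so $a\in\Ocal_K\setminus L^2$ (Remark~\ref{remark choose an element a for S 2}) forces $a\in\Ocal_K\setminus L'^2$. Fact (i) applied to the places of $S$ shows $S$ splits completely in $L'$, and applied to $v_1,v_2$ keeps these two places split completely in $L'$; in particular $S_{L'}\neq\emptyset$, so the pair $(L'/K,S)$ satisfies the standing hypotheses of the two cited propositions.

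Next I would re-run the proof of Proposition~\ref{proposition the valuation of Brauer group on local points are fixed outside S and take two value on S} verbatim with $L'$ in place of $L$: the Eisenstein argument at $v_1$ (resp.\ $v_2$), together with $v_1(a)$ (resp.\ $v_2(a)$) even and the complete splitting of $v_1,v_2$ in $L'$, gives $L'(\sqrt a)\ncong L'[x]/(cx^2+1)$ and $L'(\sqrt a)\ncong L'[x]/((1+cb^2)x^2+b^2)$, so $P(x)$ is a product of two irreducible quadratic factors over $L'$; by \cite[Proposition 7.1.1]{Sk01} this yields $\Br(V)/\Br(K)\cong\Br(V_{L'})/\Br(L')\cong\ZZ/2\ZZ$, generated by $A=(a,cx^2+1)$. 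The local invariant computations are then literally those already carried out: for $v'\in S_{L'}$ the place $v'$ lies over a completely split $v\in S$, so $L'_{v'}=K_v$ and $V(L'_{v'})=V(K_v)$, while for $v'\notin S_{L'}$ the computation coincides with the one made over $K_v$ for $v\notin S$. The rational point $Q_0=(0,b,0)\in V^0(K)$ gives $\emptyset\neq V(K)\subset V(L')$. Finally, Proposition~\ref{proposition the valuation of Brauer group on local points are fixed outside S and take two value on S property}(2)--(3) applied to $(L'/K,S)$ gives precisely: for every finite $T'\subset\Omega_{L'}$, the base extension $V_{L'}$ satisfies weak approximation off $T'$ if and only if $T'\cap S_{L'}\neq\emptyset$; taking $T'=\emptyset$ yields the last assertion that $V_{L'}$ never satisfies weak approximation.

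There is no genuine obstacle here; the one point requiring care is conceptual rather than technical, namely that the surface must be chosen \emph{once and for all using the largest field $L$}, so that the finitely many auxiliary places $v_1,v_2$ (and the non-square condition on $a$) remain valid simultaneously for every intermediate $L'$. Once this is arranged, Propositions~\ref{proposition the valuation of Brauer group on local points are fixed outside S and take two value on S} and~\ref{proposition the valuation of Brauer group on local points are fixed outside S and take two value on S property} supply everything, and nothing further needs to be proved.
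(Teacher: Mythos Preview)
Your proposal is correct and follows essentially the same approach as the paper: take $V=V_1$ from Subsection~\ref{subsection Choice of parameters for V2} and observe that the proofs of Propositions~\ref{proposition the valuation of Brauer group on local points are fixed outside S and take two value on S} and~\ref{proposition the valuation of Brauer group on local points are fixed outside S and take two value on S property} go through verbatim with any intermediate $L'$ in place of $L$, since the splitting conditions on $S$, $v_1$, $v_2$ and the non-square condition on $a$ descend from $L$ to $L'$. The paper's proof is a one-line reference to exactly this, whereas you have helpfully spelled out the two descent facts (i) and (ii) that make the substitution legitimate.
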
	
	
	\begin{proof}
		For the extension $L/K$ and the set $S,$ let $V$ be the Ch\^atelet surface chosen as in Subsection \ref{subsection Choice of parameters for V2}.  Applying the same argument about the field $L$ to its subfield $L',$ the properties that we list, are just what we have explained in Proposition \ref{proposition the valuation of Brauer group on local points are fixed outside S and take two value on S} and Proposition \ref{proposition the valuation of Brauer group on local points are fixed outside S and take two value on S property}.
	\end{proof}

	Using the construction method in Subsection \ref{subsection Choice of parameters for V2}, we have the following example, which is a special case of Proposition \ref{proposition the valuation of Brauer group on local points are fixed outside S and take two value on S}. 
	
	\begin{eg}\label{example1: construction of V_0}
		For $K=\QQ$ and $L=\QQ(\sqrt{3}),$ and let $S=\{73\}\subset\Omega_K.$ The prime numbers $11,23,73$ split completely in $L.$ Using the construction method in Subsection \ref{subsection Choice of parameters for V2}, we choose data: $S=S'=S''={73},~v_1=11,~v_2=23,~a=73,~b=1/73,~c=99$ and  $P(x)=(99x^2+1)(5428x^2/5329+1/5329).$ Then the Ch\^atelet surface given by $y^2-73z^2=P(x),$ has the properties of  Propositions \ref{proposition the valuation of Brauer group on local points are fixed outside S and take two value on S} and \ref{proposition the valuation of Brauer group on local points are fixed outside S and take two value on S property}.
	\end{eg}

	\section{The Hasse principle under extensions of base fields}

	Iskovskikh \cite{Is71} gave an example of the intersection of two quadratic hypersurfaces in $\PP^4_\QQ,$ which is a Ch\^atelet surface over
	$\QQ$ given by $y^2+z^2=(x^2-2)(-x^2+3).$ He showed that this Ch\^atelet surface is a counterexample to the Hasse principle. Similarly, Skorobogatov \cite[Pages 145-146]{Sk01} gave a family of Ch\^atelet surfaces with a parameter over $\QQ.$ He discussed the property of the Hasse principle for this family. Poonen \cite[Proposition 5.1]{Po09} generalized their arguments to any number field. Given a number field $K,$ he constructed a Ch\^atelet surface defined over $K,$ which is a counterexample to the Hasse principle. He used \v{C}ebotarev's density theorem for some ray class fields to choose the parameters for the equation (\ref{equation}). The Ch\^atelet surface that he constructed, has the properties of \cite[Lemma 5.5]{Po09} (a special situation of the following Proposition \ref{proposition the valuation of Brauer group on local points are fixed and on S nontrivial}: the case when $S=\{v_0\}$ for some place $v_0$ associated to some large prime element in $\Ocal_K$), which is the main ingredient in the proof of \cite[Proposition 5.1]{Po09}. In this section, we generalize them and consider the Hasse principle of Ch\^atelet surfaces under extensions of base fields.

	\subsection{Choice of parameters for the equation (\ref{equation})}\label{subsection Choice of parameters for V3}
	By choosing $a\in \Ocal_K\backslash K^2$ as in Section \ref{section: Ch\^atelet surfaces and choose an element a for S}, we  choose an element $b\in K^\times$ in the following way.
	
	Let $S'=\{v\in \infty_K^r | \tau_v(a)<0\}\cup \{v\in \Omega_K^f\backslash 2_K | v(a){\rm ~is ~odd} \}$ be as in Remark \ref{remark choose an element a for S remark 1}, then $S'\supset S$ is a finite set. If $v \in S\backslash \infty_K,$ then $v(a)$ is odd. Then by Lemma \ref{lemma openness for hilbert symbal with odd valuation element}, the set  $\{b\in \Ocal_{K_v}^\times|(a,b)_v=-1\}$ is a nonempty open subset of $\Ocal_{K_v}.$ If $v \in S'\backslash (S\cup \infty_K),$ then by Lemma \ref{lemma openness for hilbert symbal 1}, the set  $\{b\in \Ocal_{K_v}^\times|(a,b)_v=1\}$ is a nonempty open subset of $\Ocal_{K_v}.$ By Lemma \ref{lemma strong approximation for A^1}, we can choose a nonzero element $b\in \Ocal_K[1/2]$ satisfying the following conditions:
	\begin{itemize}
		\item $\tau_v(b)<0$ for all $v\in S\cap \infty_K,$
		\item $\tau_v(b)>0$ for all $v\in (S'\backslash S)\cap \infty_K,$
		\item $(a,b)_v=-1$ and $v(b)=0$ for all $v\in S\backslash \infty_K,$
		\item $(a,b)_v=1$ and $v(b)=0$  for all $v\in S'\backslash (S\cup \infty_K).$
	\end{itemize}	
	
	We  choose an element $c\in K^\times$ with respect to the chosen $a,b$ in the following way.
	
	Let $S''=\{v\in \Omega_K^f\backslash  2_K | v(b)\neq 0 \},$ then $S''$ is a finite set and $S'\cap S''=\emptyset.$ By Theorem \ref{theorem Chebotarev density theorem}, we can take two different finite places $v_1,v_2\in \Omega_K^f\backslash (S'\cup S''\cup 2_K )$ splitting completely in $L.$ If $v\in (S'\backslash \infty_K)\cup \{v_1,v_2\},$ then $b\in\Ocal_{K_v}^\times.$ In this case, by Lemma \ref{lemma openness for v(x)=n}, the sets $\{c\in K_v|v(bc+1)=v(a)+2\},$ $\{c\in K_v|v(c)=1\}$ and $\{c\in K_v|v(bc+1)=1\}$ are nonempty open subsets of $\Ocal_{K_v}.$
	If $v\in S'',$ by Lemma \ref{lemma openness for hilbert symbal 1}, the set $\{c\in \Ocal_{K_v}^\times|(a,c)_v=1\}$ is a nonempty open subset of $\Ocal_{K_v}.$
	Also by Lemma \ref{lemma strong approximation for A^1}, we can choose a nonzero element $c\in\Ocal_K[1/2]$ satisfying the following conditions:
	\begin{itemize}
		\item $0<\tau_v(c)<-1/\tau_v(b)$ for all $v\in S\cap \infty_K,$
		\item $\tau_v(bc+1)<0$ for all $v\in (S'\backslash S)\cap \infty_K,$
		\item $v(bc+1)=v(a)+2$ for all $v\in S'\backslash \infty_K,$
		\item $(a,c)_v=1$ for all $v\in S'',$
		\item $v_1(c)=1$ and $v_2(bc+1)=1$ for the chosen $v_1,v_2$ above.
	\end{itemize}

	Let $P(x)=(x^2-c)(bx^2-bc-1),$ and let $V_2$ be the Ch\^atelet surface given by $y^2-az^2=(x^2-c)(bx^2-bc-1).$ 
	
	\begin{proposition}\label{proposition the valuation of Brauer group on local points are fixed and on S nontrivial}
		For any extension of number fields $L/K,$ and any finite subset $S \subset \Omega_K\backslash (\infty_K^c\cup 2_K)$ splitting completely in $L,$ there exists a Ch\^atelet surface $V_2$ defined over $K,$ which has the following properties.
		\begin{itemize}		
			\item The Brauer group $\Br(V_2)/\Br(K)\cong\Br(V_{2L})/\Br(L)\cong \ZZ/2\ZZ,$ is generated by an element $A\in \Br(V_2).$ The subset $V_2(\AA_K)\subset V_2(\AA_L)$ is nonempty.	
			\item For any $v\in \Omega_K,$ and any $P_v\in V_2(K_v),$ 
			\begin{equation*}
				\inv_v(A(P_v))=\begin{cases}
					0& if\quad v\notin S,\\
					1/2 & if \quad v\in S.
				\end{cases}
			\end{equation*}
			\item For any $v'\in \Omega_L,$ and any $P_{v'}\in V_2(L_{v'}),$ 
			\begin{equation*}
				\inv_{v'}(A(P_{v'}))=\begin{cases}
					0& if\quad v'\notin S_L,\\
					1/2 & if \quad v'\in S_L.
				\end{cases}
			\end{equation*}
		\end{itemize}
	\end{proposition}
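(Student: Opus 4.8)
The plan is to follow the proof of Proposition~\ref{proposition the valuation of Brauer group on local points are fixed outside S and take two value on S} (and to generalize \cite[Lemma 5.5]{Po09}): compute the local invariant of a generator $A$ of the Brauer group at every place, the point being that here this invariant must come out \emph{constant} on $V_2(K_v)$ and on $V_2(L_{v'})$ at each place. For the Brauer group: $v_1$ lies outside $S'\cup S''\cup 2_K$, so $v_1(a)$ is even and $v_1(b)=0$; since $v_1(c)=1$ and $v_1$ splits completely in $L$, the polynomial $x^2-c$ is Eisenstein at $v_1$ over $K_{v_1}=L_{v_1}$, hence irreducible over $K$ and over $L$, and comparing the even valuation $v_1(a)$ with $v_1(c)=1$ shows $K(\sqrt a)\ncong K[x]/(x^2-c)$ and likewise over $L$. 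Writing $bx^2-bc-1\sim x^2-(c+1/b)$ and using $v_2(bc+1)=1$, the same argument at $v_2$ shows $bx^2-bc-1$ is irreducible over $K$ and $L$, defines a field distinct from $K(\sqrt a)$ and $L(\sqrt a)$, and is non-isomorphic to $x^2-c$. So $P(x)$ is separable with two distinct irreducible quadratic factors, both unequal to $K(\sqrt a)$, and \cite[Propositions 7.1.1, 7.1.2]{Sk01} give $\Br(V_2)/\Br(K)\cong\Br(V_{2L})/\Br(L)\cong\ZZ/2\ZZ$, generated by $A=(a,x^2-c)$. On $V_2^0$, $(x^2-c)(bx^2-bc-1)=y^2-az^2$ is a norm from $K(\sqrt a)$, so $(a,P(x))_v=0$ at every local point; hence, writing $u=x^2-c$, at a point above $x$ one has $A(P_v)=(a,u)_v=(a,bu-1)_v$, and such a point exists in $V_2^0(K_v)$ iff $(a,u)_v=(a,bu-1)_v$ or $u(bu-1)=0$.

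By Remark~\ref{remark the implicit function thm and local constant Brauer group} it suffices to evaluate $A$ on $V_2^0(K_v)$, and I would partition $\Omega_K$ as follows. At places where $a$ is a square (all complex and all $2$-adic places, and $\infty_K\setminus S'$) the invariant is $0$ with a local point by Remark~\ref{remark birational to PP^2}. For $v\in(S'\setminus S)\cap\infty_K$ (so $\tau_v(a)<0$), the conditions $\tau_v(b)>0$, $\tau_v(bc+1)<0$ give $x^2-c>0$ and $bx^2-bc-1>0$ for all real $x$, whence $P(x)>0$ and $(a,x^2-c)_v=1$. For finite $v\notin S'\cup S''\cup 2_K$ (including $v_1,v_2$), where $v(a)$ is even and $v(b)=0$: the only way $v(u)$ can be odd is $v(u)=v(c)>0$ with $v(x)\ge0$, and then $v(bu-1)=0$, so $(a,bu-1)_v=1$ by Lemma~\ref{lemma hilbert symbal lifting for odd prime}, and one concludes either $(a,u)_v=1$ (invariant $0$) or no point lies above $x$. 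For $v\in S''$ one gets the invariant $0$ similarly using $(a,c)_v=1$ and $v(b)>0$ (if $v(c)$ is odd, $(a,c)_v=1$ forces $a\in K_v^{\times2}$). Finally for $v\in S'\setminus(S\cup\infty_K)$: from $v(bc+1)=v(a)+2$ one gets $v(c)=0$ and $-bc\equiv1\pmod{\pi_v}$, so $-bc\in K_v^{\times2}$ and $(a,-c)_v=(a,b)_v=1$; running through the subcases on $v(x)$ and evaluating $(a,u)_v$, $(a,bu-1)_v$ with Lemmas~\ref{lemma hilbert symbal lifting for odd prime} and~\ref{lemma Hensel lemma for Hilbert symbal} gives invariant $0$ at every point.

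The places of $S$ are the crux. For $v\in S\cap\infty_K$ ($\tau_v(a)<0$), the condition $0<\tau_v(c)<-1/\tau_v(b)$ with $\tau_v(b)<0$ gives $c>0$ and $0<bc+1<1$, so $bx^2-bc-1<0$ for all real $x$; hence $P(x)<0$ when $x^2>c$, so there is no real point there (as $y^2-az^2\ge0$), while for $x^2\le c$ one has $x^2-c\le0$ and $A(P_v)=(a,x^2-c)_v=-1$ (at $x^2=c$ instead $A(P_v)=(a,bx^2-bc-1)_v=(a,-1)_v=-1$), i.e.\ $\inv_v(A(P_v))=\half$. For $v\in S\setminus\infty_K$ ($v$ odd, $v(a)$ odd, $v(b)=0$, $(a,b)_v=-1$, $v(bc+1)=v(a)+2$): then $v(c)=0$ and $-bc\equiv1\pmod{\pi_v}$, so $-bc\in K_v^{\times2}$ and $(a,-c)_v=(a,b)_v=-1$; going through the subcases $v(x)<0$, $v(x)=0$, $v(x)>0$ and $u=0$, and evaluating $(a,u)_v$, $(a,bu-1)_v$ with Lemmas~\ref{lemma hilbert symbal lifting for odd prime} and~\ref{lemma Hensel lemma for Hilbert symbal}, one finds a point above $x$ exists only when the common value $(a,u)_v=(a,bu-1)_v$ equals $-1$ (for $v(x)<0$ one gets $(a,u)_v=1\ne-1=(a,bu-1)_v$, so no point), and one checks $V_2^0(K_v)\ne\emptyset$; thus $\inv_v(A(P_v))=\half$ everywhere. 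I expect this last case to be the main obstacle: showing that \emph{every} local point at $v\in S$ lies in the ``invariant $\half$'' locus, and that this locus is nonempty, is exactly what the careful choice of $b,c$ achieves, forcing the interplay of the Hilbert-symbol condition $(a,b)_v=-1$ with the valuation condition $v(bc+1)=v(a)+2$.

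It remains to pass to $L$ and to record nonemptiness. If $v'\in S_L$ lies over $v\in S$, then $v$ splits completely, $L_{v'}=K_v$, and the computation above applies verbatim, giving $\inv_{v'}(A(P_{v'}))=\half$. If $v'\notin S_L$, it lies over some $v\notin S$, and either $a\in L_{v'}^{\times2}$ (invariant $0$) or one reruns the relevant case over $L_{v'}$ — the conditions imposed at $v$ transfer directly (a Hilbert symbol equal to $1$ stays $1$; positivity is preserved; $v_1,v_2$ split, so their valuation conditions persist with ramification index $1$) or become vacuous because $a$ becomes a square — giving invariant $0$. Finally $V_2(\AA_K)\ne\emptyset$ because the case analysis produces a local point at every place of $K$, and $V_2(\AA_K)\subset V_2(\AA_L)$ by functoriality of adelic points under base change, which completes the proof.
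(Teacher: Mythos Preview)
The proposal is correct and follows essentially the same approach as the paper: both use the surface from Subsection~\ref{subsection Choice of parameters for V3}, identify the Brauer generator $A=(a,x^2-c)=(a,bx^2-bc-1)$ via the Eisenstein argument at $v_1,v_2$, and then run the same place-by-place Hilbert-symbol computation, with only cosmetic differences in organization (you separate $S''$ as its own case, and at $v\in S\setminus\infty_K$ you note that no point lies above $x$ with $v(x)<0$, whereas the paper simply computes $(a,bx^2-bc-1)_v=-1$ for all $v(x)\le0$). One minor inaccuracy that does not affect the argument: for finite $v\notin S'\cup S''\cup 2_K$, the claim ``the only way $v(u)$ can be odd is $v(u)=v(c)>0$'' is not literally true (cancellation $2v(x)=v(c)$ can push $v(u)$ past $v(c)$), but what you actually need and use---that $v(u)$ odd forces $v(u)>0$, hence $v(bu-1)=0$---is correct.
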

	
	\begin{proof}

		For the extension $L/K$ and the finite set $S,$ we  check that the Ch\^atelet surface $V_2$ chosen as in Subsection \ref{subsection Choice of parameters for V3}, has the properties.
		
		Firstly, we need to check that $V_2$ has an $\AA_K$-adelic point.
		\begin{enumerate}
		\item Suppose that $v\in (\infty_K\backslash S')\cup 2_K.$ Then $a\in K_v^{\times 2}.$ By Remark \ref{remark birational to PP^2}, the surface $V_2$ admits a $K_v$-point.
		\item Suppose that $v\in (S'\backslash S)\cap \infty_K.$ Let $x_0=0.$ Since $\tau_v(b)>0$ and $\tau_v(bc+1)<0,$ we have $\tau_v(c)<0$ and $\tau_v((x_0^2-c)(bx_0^2-bc-1))=\tau_v(c(bc+1))>0,$ which implies that $V_2^0$ admits a $K_v$-point with $x=0.$
		\item Suppose that $v\in S'\backslash (S\cup \infty_K).$  Take $x_0\in K_v$ such that the valuation $v(x_0)<0.$ Since $b\in \Ocal_{K_v}^\times$ and $c\in\Ocal_K[1/2],$ by Lemma \ref{lemma Hensel lemma for Hilbert symbal}, we have
		$(a,x_0^2-c)_v=(a,x_0^2)_v=1$ and $(a,bx_0^2-bc-1)_v=(a,bx_0^2)_v=(a,b)_v.$ By the choice of $b,$ we have $(a,b)_v=1.$ Hence $(a,(x_0^2-c)(bx_0^2-bc-1))_v=(a,b)_v=1,$ which implies that $V_2^0$ admits a $K_v$-point with $x=x_0.$
		\item Suppose that $v\in S''.$ By the choice of $a,b,c,$ we have $(a,c)_v=1,$ $bc+1\in \Ocal_{K_v}^\times,$ and the valuation $v(a)$ is even.  By Lemma \ref{lemma hilbert symbal lifting for odd prime}, we have $(a,bc+1)_v=1.$ Let $x_0=0.$ Then $(a,(x_0^2-c)(bx_0^2-bc-1))_v=(a,c(bc+1))_v=(a,c)_v(a,bc+1)_v=1,$ which implies that $V_2^0$ admits a $K_v$-point with $x=0.$
		\item Suppose that $v\in \Omega_K^f\backslash  (S'\cup S''\cup 2_K ).$ Then $v(b)=0.$  Take $x_0\in K_v$ such that the valuation $v(x_0)<0.$ Since $b\in \Ocal_{K_v}^\times$ and $c\in\Ocal_K[1/2],$ by Lemma \ref{lemma Hensel lemma for Hilbert symbal}, we have	
		$(a,x_0^2-c)_v=(a,x_0^2)_v=1$ and $(a,bx_0^2-bc-1)_v=(a,bx_0^2)_v=(a,b)_v.$ Since $v(a)$ and $v(b)$ are both even, by Lemma \ref{lemma hilbert symbal lifting for odd prime}, we have $(a,b)_v=1.$
		So $(a,(x_0^2-c)(bx_0^2-bc-1))_v=(a,b)_v=1,$ which implies that $V_2^0$ admits a $K_v$-point with $x=x_0.$
		\item Suppose that $v\in S\cap\infty_K.$ Let $x_0=0.$ Then by the choice of $a,b,c,$ we have $\tau_v(a)<0,$ $\tau_v(c)>0$ and $\tau_v(bc+1)>0.$ So $(a,(x_0^2-c)(bx_0^2-bc-1))_v=(a,c(bc+1))_v=1,$ which implies that $V_2^0$ admits a $K_v$-point with $x=0.$
		\item Suppose that $v\in S\backslash\infty_K.$ Choose a prime element $\pi_v$ and take $x_0=\pi_v.$ By the choice of $a,b,c,$ we have $b,c\in \Ocal_{K_v}^\times,~v(bx_0^2)=2,$ and $v(bc+1)=v(a)+2\geq 3.$ By Lemma \ref{lemma Hensel lemma for Hilbert symbal}, we have
		$(a,x_0^2-c)_v=(a,-c)_v$ and $(a,bx_0^2-bc-1)_v=(a,bx_0^2)_v.$ By Hensel's lemma, we have $-bc=1-(bc+1)\in K_v^{\times 2}.$
		So $(a,(x_0^2-c)(bx_0^2-bc-1))_v=(a,-bcx_0^2)_v=1,$ which implies that $V_2^0$ admits a $K_v$-point with $x=\pi_v.$		
	\end{enumerate}

		Secondly, we need to prove the statement about the Brauer group, and find the element $A$ in this proposition.
		
		By the choice of the places $v_1,$ the polynomial $x^2-c$ is an Eisenstein polynomial, so it is irreducible over $K_{v_1}.$ Since $v_1(a)$ is even, we have $K(\sqrt{a})K_{v_1}\ncong K_{v_1}[x]/(x^2-c).$ So $K(\sqrt{a})\ncong K[x]/(x^2-c).$ The same argument holds for the place $v_2$ and polynomial $bx^2-bc-1.$
		For all places of $S$ split completely in $L,$ then by Remark \ref{remark choose an element a for S 2}, we have $a\in \Ocal_K\backslash L^2.$ 
		By the splitting condition of $v_1, v_2,$ we have $L(\sqrt{a})\ncong L[x]/(x^2-c)$ and $L(\sqrt{a})\ncong L[x]/(bx^2-bc-1).$
		So $P(x)=(x^2-c)(bx^2-bc-1)$ is separable and a product of two degree-2 irreducible factors over $K$ and $L.$ According to \cite[Proposition 7.1.1]{Sk01}, the Brauer group $\Br(V_2)/\Br(K)\cong\Br(V_{2L})/\Br(L)\cong \ZZ/2\ZZ.$ Furthermore, by Proposition 7.1.2 in loc. cit, we take the quaternion algebra $A=(a,x^2-c)\in \Br(V_2)$ as a generator element of this group. Then we have the equality $A=(a,x^2-c)=(a,bx^2-bc-1)$ in $\Br(V_2).$ 
		
		Thirdly, We need to compute the evaluation of $A$ on $V_2(K_v)$ for all $v\in \Omega_K.$ 
		
		By Remark \ref{remark the implicit function thm and local constant Brauer group}, it suffices to compute the local invariant $\invap$ for all $P_v\in V_2^0(K_v)$ and all $v\in \Omega_K.$
		
		\begin{enumerate}
		\item Suppose that $v\in (\infty_K\backslash S')\cup 2_K.$  Then $a\in K_v^{\times 2},$ so $\invap=0$ for all $P_v\in V_2(K_v).$
		\item Suppose that $v\in (S'\backslash S)\cap \infty_K.$  By the choice of $b,c,$ we have $\tau_v(b)>0$ and $\tau_v(bc+1)<0.$ So, for any $x\in K,$ we have $(a,bx^2-bc-1)_v=1.$ Hence $\invap=0$ for all $P_v\in V_2^0(K_v).$
		\item Suppose that $v\in S'\backslash (S\cup \infty_K).$ By the choice of $b,$ we have $(a,b)_v=1.$ Take an arbitrary $P_v\in V_2^0(K_v).$
		If $v(x)<0$ at $P_v,$ by Lemma \ref{lemma Hensel lemma for Hilbert symbal}, we have  $(a,x^2-c)_v=(a,x^2)_v=1.$ 
		If $v(x)> 0$ at $P_v,$ since $b,c\in \Ocal_{K_v}^\times$  and $v(bc+1)=v(a)+2\geq 3,$ by Lemma \ref{lemma Hensel lemma for Hilbert symbal}, we have $(a,x^2-c)_v=(a,-c)_v.$ By Hensel's lemma, we have $-bc=1-(bc+1)\in K_v^{\times 2}.$
		So $(a,x^2-c)_v=(a,-c)_v=(a,-bc)_v=1.$
		If $v(x)=0$ at $P_v,$ since $b\in \Ocal_{K_v}^\times$ and $v(bc+1)=v(a)+2\geq 3,$ by Lemma \ref{lemma Hensel lemma for Hilbert symbal}, we have $(a,bx^2-bc-1)_v=(a,bx^2)_v=1.$ So $\invap=0.$
		\item Suppose that $v\in \Omega_K^f\backslash ( S'\cup 2_K ).$ Take an arbitrary $P_v\in V_2^0(K_v).$ If $\invap=1/2,$ then $(a,bx^2-bc-1)_v=(a,x^2-c)_v=-1$ at $P_v.$ Since $v(a)$ is even, by Lemma \ref{lemma hilbert symbal lifting for odd prime}, the last equality implies that $v(x^2-c)$ is odd, so it is positive. So $v(bx^2-bc-1)=0.$ By Lemma \ref{lemma hilbert symbal lifting for odd prime}, we have $(a,bx^2-bc-1)_v=1,$ which is a contradiction. So $\invap=0.$		
		\item Suppose that $v\in S\cap\infty_K.$  Take an arbitrary $P_v\in V_2^0(K_v).$ If $A(P_v)=0,$ then $(a,bx^2-bc-1)_v=(a,x^2-c)_v=1$ at $P_v.$ The last equality implies that $\tau_v(x^2-c)>0.$  By the choice of $b,$ we have $\tau_v(b)<0,$ so $\tau_v(bx^2-bc-1)<0,$ which contradicts $(a,bx^2-bc-1)_v=1.$ So $\invap=\half.$
		\item Suppose that $v\in S\backslash\infty_K.$ By the choice of $b,$ we have $(a,b)_v=-1.$ Take an arbitrary $P_v\in V_2^0(K_v).$ If $v(x)\leq 0$ at $P_v,$ for $b\in \Ocal_{K_v}^\times$ and $v(bc+1)=v(a)+2\geq 3,$ by Lemma \ref{lemma Hensel lemma for Hilbert symbal}, we have $(a,bx^2-bc-1)_v=(a,bx^2)_v=-1.$ 
		If $v(x)> 0$ at $P_v,$ since $b,c\in \Ocal_{K_v}^\times$ and $v(bc+1)=v(a)+2\geq 3,$ by Lemma \ref{lemma Hensel lemma for Hilbert symbal}, we have $(a,x^2-c)_v=(a,-c)_v.$
		By Hensel's lemma, we have $-bc=1-(bc+1)\in K_v^{\times 2}.$
		So	$(a,x^2-c)_v=(a,-c)_v=-(a,-bc)_v=-1.$ So $\invap=\half.$									
	\end{enumerate} 

		Finally, we need to compute the evaluation of $A$ on $V_2(L_{v'})$ for all $v'\in \Omega_L.$
		\begin{enumerate}
					\item Suppose that $v' \in S_L.$ Let $v\in \Omega_K$ be the restriction of $v'$ on $K.$ By the assumption that $v$ splits completely in $L,$ we have $K_v=L_{v'}.$ So $V_2(K_v)=V_2(L_{v'}).$ Then for any $P_{v'}\in V_2(L_{v'}),$ denote $P_{v'}$ in $V_2(K_v)$ by $P_v.$ Then by the argument already shown,  the local invariant $\inuap=\invap=\half.$ 
			\item Suppose that $v'\in \Omega_L\backslash S_L.$ This local computation is the same as the case $v\in \Omega_K\backslash S.$
		\end{enumerate}
	\end{proof}

	\begin{remark}\label{remark the valuation of Brauer group on local points are fixed and on S nontrivial}
		If  the surface $V_2$ has a $K$-rational point $Q,$ then by the global reciprocity law, the sum $\sum_{v\in \Omega_K}\inv_v(A(Q))=0$ in $\QQ/\ZZ.$	
		If the number $\sharp S$ is odd, then from Proposition \ref{proposition the valuation of Brauer group on local points are fixed and on S nontrivial}, this sum is ${\sharp S}/2,$ which is nonzero in $\QQ/\ZZ.$ So, in this case, the surface $V_2$ has no $K$-rational point, which implies that the surface $V_2$ is a counterexample to the Hasse principle.	
		If the number $\sharp S$ is even, then for any $(P_v)_{v\in\Omega_K}\in V_2(\AA_K),$ by Proposition \ref{proposition the valuation of Brauer group on local points are fixed and on S nontrivial}, the sum $\sum_{v\in \Omega_K}\inv_v(A(P_v))={\sharp S}/2$ is $0$ in $\QQ/\ZZ.$ For $\Br(V_2)/\Br(K)$ is generated by the element $A,$ we have $V_2(\AA_K)^{\Br}= V_2(\AA_K)\neq \emptyset.$ According to \cite[Theorem B]{CTSSD87a,CTSSD87b}, the Brauer-Manin obstruction to the Hasse principle and weak approximation is the only one for Ch\^atelet surfaces. So, in this case, the set $V_2(K)\neq \emptyset,$ and it is dense in $V_2(\AA_K)^{\Br}= V_2(\AA_K),$ i.e. the surface $V_2$ has a $K$-rational point and satisfies weak approximation.  In particular, if the number $\sharp S=0,$ i.e. $S=\emptyset,$ though the Brauer group $\Br(V_2)/\Br(K)$ is nontrivial, it gives no obstruction to week approximation for $V_2.$
	\end{remark}

	Combining the construction method in Subsection \ref{subsection Choice of parameters for V3} with the global reciprocity law, we can relate the properties in Proposition \ref{proposition the valuation of Brauer group on local points are fixed outside S and take two value on S} to the Hasse principle and weak approximation. 
	
	\begin{thm}\label{thm: interesting result for Chatelet surface2}
	For any extension of number fields $L/K,$ there exists a Ch\^atelet surface $V$ over $K$ with $V(\AA_K)\neq \emptyset,$ such that
	for every intermediate field $K\subset L'\subset L,$ the Brauer group $\Br(V)/\Br(K)\cong\Br(V_{L'})/\Br(L')\cong \ZZ/2\ZZ.$ And the surface $V_{L'}$ has the following properties.
	\begin{itemize}
		\item If the degree $[L':K]$ is odd, then the surface $V_{L'}$ is a counterexample to the Hasse principle, i.e. $V(L')=\emptyset.$ In particular, the surface $V$ is a counterexample to the Hasse principle.
		\item If the degree $[L':K]$ is even, then the surface $V_{L'}$ satisfies weak approximation. In particular, in this case, the set $V(L')\neq \emptyset.$
	\end{itemize}
\end{thm}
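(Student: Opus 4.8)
The plan is to invoke the construction of Subsection \ref{subsection Choice of parameters for V3} with a carefully chosen base set $S$, and then exploit the fact that a place $v\in S$ that splits completely in $L$ contributes to $S_{L'}$ a number of places equal to $[L':K]$ (since $S$ consists of primes that split completely in $L$, hence in every intermediate $L'$, each such $v$ has exactly $[L':K]$ places of $L'$ above it). First I would set up the hypotheses of Proposition \ref{proposition the valuation of Brauer group on local points are fixed and on S nontrivial}: apply Theorem \ref{theorem Chebotarev density theorem} to pick a single finite place $v_0\in\Omega_K\backslash(\infty_K^c\cup 2_K)$ (in fact an odd place, to avoid the $2$-adic constraints) that splits completely in $L$, and take $S=\{v_0\}$. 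Run the construction in Subsection \ref{subsection Choice of parameters for V3} with this $S$ to obtain a Ch\^atelet surface $V:=V_2$ over $K$. By Proposition \ref{proposition the valuation of Brauer group on local points are fixed and on S nontrivial}, $V(\AA_K)\neq\emptyset$; $\Br(V)/\Br(K)\cong\Br(V_{L})/\Br(L)\cong\ZZ/2\ZZ$ generated by $A=(a,x^2-c)$; and for every place $w$ of $K$ (resp.\ $w'$ of $L$) the local invariant $\inv_w(A(P_w))$ is $\half$ exactly when $w\in S$ (resp.\ $w'\in S_L$) and $0$ otherwise.

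Next I would transfer these statements to an arbitrary intermediate field $K\subset L'\subset L$. The key point is that the proof of Proposition \ref{proposition the valuation of Brauer group on local points are fixed and on S nontrivial} only uses that $S$ splits completely in $L$ and that $a\in\Ocal_K\backslash L^2$ (Remark \ref{remark choose an element a for S 2}); both facts pass verbatim to $L'$, since $v_0$ splits completely in $L'$ and $a\notin L^2\supseteq L'^2$ forces $a\notin L'^2$. So the same construction, viewed over $L'$, satisfies the conclusion of Proposition \ref{proposition the valuation of Brauer group on local points are fixed and on S nontrivial} with $S$ replaced by $S_{L'}$: in particular $\Br(V_{L'})/\Br(L')\cong\ZZ/2\ZZ$ generated by (the restriction of) $A$, and $\inv_{w'}(A(P_{w'}))=\half$ for $w'\in S_{L'}$ and $=0$ otherwise. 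Here $S_{L'}$ is the set of places of $L'$ above $v_0$, which has cardinality $\sharp S_{L'}=[L':K]$ because $v_0$ splits completely in $L'$.

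Finally I would run the parity argument of Remark \ref{remark the valuation of Brauer group on local points are fixed and on S nontrivial} over $L'$. For any adelic point $(P_{w'})_{w'\in\Omega_{L'}}\in V_{L'}(\AA_{L'})$ (nonempty since $V(\AA_K)\neq\emptyset$ and the construction produces local points everywhere, which remain local points over the completions of $L'$), the global reciprocity sequence gives $\sum_{w'\in\Omega_{L'}}\inv_{w'}(A(P_{w'}))=\sharp S_{L'}/2=[L':K]/2$ in $\QQ/\ZZ$ whenever this sum is attained by a rational point. If $[L':K]$ is odd, this sum equals $\half\neq 0$, so $V_{L'}(\AA_{L'})^{\Br}=\emptyset$ for the Brauer class $A$ (every adelic point has the same total invariant $\half$), hence by \cite[Theorem B]{CTSSD87a,CTSSD87b} — the Brauer--Manin obstruction being the only one for Ch\^atelet surfaces — we get $V(L')=V_{L'}(L')=\emptyset$, i.e.\ $V_{L'}$ is a counterexample to the Hasse principle; taking $L'=K$ gives the ``in particular'' clause. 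If $[L':K]$ is even, the total invariant is $0$ for every adelic point and every $w'$, so $V_{L'}(\AA_{L'})^{\Br}=V_{L'}(\AA_{L'})\neq\emptyset$, and again by \cite[Theorem B]{CTSSD87a,CTSSD87b} the set $V(L')$ is nonempty and dense in $V_{L'}(\AA_{L'})$, i.e.\ $V_{L'}$ satisfies weak approximation. I expect the only real subtlety — rather than an obstacle — to be the bookkeeping that the construction in Subsection \ref{subsection Choice of parameters for V3} genuinely applies unchanged with $L$ replaced by the intermediate $L'$ and that $\sharp S_{L'}=[L':K]$; the rest is the reciprocity/parity computation already carried out in Remark \ref{remark the valuation of Brauer group on local points are fixed and on S nontrivial}.
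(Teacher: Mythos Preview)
Your proposal is correct and follows essentially the same approach as the paper: pick a single place $v_0\in\Omega_K\setminus(\infty_K^c\cup 2_K)$ splitting completely in $L$ via \v{C}ebotarev, set $S=\{v_0\}$, build $V=V_2$ as in Subsection~\ref{subsection Choice of parameters for V3}, note that Proposition~\ref{proposition the valuation of Brauer group on local points are fixed and on S nontrivial} applies verbatim with $L$ replaced by any intermediate $L'$, and conclude via the parity computation of Remark~\ref{remark the valuation of Brauer group on local points are fixed and on S nontrivial} using $\sharp S_{L'}=[L':K]$. The only cosmetic wrinkle is your phrase ``whenever this sum is attained by a rational point'': the point is simply that the total invariant equals $\sharp S_{L'}/2$ for \emph{every} adelic point, so reciprocity blocks rational points when this is $\half$ and the Brauer set equals all of $V_{L'}(\AA_{L'})$ when it is $0$.
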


	\begin{proof}
		By Theorem \ref{theorem Chebotarev density theorem}, we can take a place $v_0\in \Omega_K\backslash (\infty_K^c\cup 2_K)$ splitting completely in $L.$ Let $S=\{v_0\}.$ Using the construction method in Subsection \ref{subsection Choice of parameters for V3}, there exists a Ch\^atelet surface $V$ defined over $K$ having the properties of Proposition \ref{proposition the valuation of Brauer group on local points are fixed and on S nontrivial}. For any subfield $L'\subset L$ over $K,$ by the same argument as in the proof of Proposition \ref{proposition the valuation of Brauer group on local points are fixed and on S nontrivial}, we have $\Br(V)/\Br(K)\cong\Br(V_{L'})/\Br(L')\cong \ZZ/2\ZZ.$ Since $v_0$ splits completely in $L,$ it also does in $L'.$ Since $\sharp S$ is odd, if $[L':K]$ is odd, then $\sharp S_{L'}$ is odd; if $[L':K]$ is even, then $\sharp S_{L'}$ is even. Applying the same argument about the field $L$ to its subfield $L',$ the properties that we list, are just what we have explained in Remark \ref{remark the valuation of Brauer group on local points are fixed and on S nontrivial}.
	\end{proof}
	
	\begin{remark}
		Though the Brauer group $\Br(V)/\Br(K)\cong\Br(V_{L'})/\Br(L')\cong \ZZ/2\ZZ$ in Theorem \ref{thm: interesting result for Chatelet surface2}, is nontrivial, it gives an obstruction to the Hasse principle for $V,$ also $V_{L'}$ if $[L':K]$ is odd; but no longer gives an obstruction to week approximation for $V_{L'}$ if $[L':K]$ is even.
	\end{remark}

	Using the construction method in Subsection \ref{subsection Choice of parameters for V3}, we have the following example, which is a special case of Proposition \ref{proposition the valuation of Brauer group on local points are fixed and on S nontrivial}. 
	
	\begin{eg}
				Let $K=\QQ,$ and let $\zeta_7$ be a primitive $7$-th root of unity. Let $\alpha=\zeta_7+\zeta_7^{-1}$ with the minimal polynomial $x^3+x^2-2x-1.$ Let  
		$L=\QQ(\alpha).$  Then the degree $[L:K]=3.$ Let $S=\{13\}.$ For $13^2\equiv 1\mod 7,41^2\equiv 1\mod 7,$ and $43\equiv 1\mod 7,$ the places $13,41,43$ split completely in $L.$ Using the construction method in Subsection \ref{subsection Choice of parameters for V3}, we choose data: $S=\{13\},~S'=\{13,29\},~S''=\{5\},~v_1=43,~v_2=41,~a=377,~b=5,~c=878755181$ and  $P(x)=(x^2-878755181)(5x^2-4393775906).$ Then the Ch\^atelet surface given by $y^2-377z^2=P(x),$ has the properties of  Proposition \ref{proposition the valuation of Brauer group on local points are fixed and on S nontrivial}. 
	\end{eg}

	\begin{footnotesize}
		\noindent\textbf{Acknowledgements.} The author would like to thank his thesis advisor Y. Liang for proposing the related problems, papers and many fruitful discussions, and thank  the anonymous
		referees for their careful scrutiny and valuable suggestions.  The author is partially supported by NSFC Grant No. 12071448.
	\end{footnotesize}
	
\begin{bibdiv}
	\begin{biblist}
		
		\bib{CF67}{book}{
			author={Cassels, J.},
			author={Fr{\"o}hlich, A.},
			title={Algebraic number theory},
			publisher={Academic Press},
			date={1967},
		}
		
		\bib{CT03}{book}{
			author={Colliot-Th\'el\`ene, J.-L.},
			title={Points rationnels sur les fibrations. {I}n},
			subtitle={Higher dimensional varieties and rational points},
			series={Bolyai Society Mathematical studies},
			publisher={Springer-Verlag},
			date={2003},
			volume={12},
			note={pp. 171-221},
		}
		
		\bib{CTSSD87a}{article}{
			author={Colliot-Th\'el\`ene, J.-L.},
			author={Sansuc, J.-J.},
			author={Swinnerton-Dyer, S.},
			title={Intersections of two quadrics and {C}h\^atelet surfaces {I}},
			date={1987},
			journal={J. Reine Angew. Math.},
			volume={373},
			pages={37\ndash 107},
		}
		
		\bib{CTSSD87b}{article}{
			author={Colliot-Th\'el\`ene, J.-L.},
			author={Sansuc, J.-J.},
			author={Swinnerton-Dyer, S.},
			title={Intersections of two quadrics and {C}h\^atelet surfaces {II}},
			date={1987},
			journal={J. Reine Angew. Math.},
			volume={374},
			pages={72\ndash 168},
		}
		
		\bib{Is71}{article}{
			author={Iskovskikh, V.},
			title={A counterexample to the {H}asse principle for systems of two
				quadratic forms in five variables},
			date={1971},
			journal={Mat. Zametki},
			volume={10},
			pages={253\ndash 257},
		}
		
		\bib{Li18}{article}{
			author={Liang, Y.},
			title={Non-invariance of weak approximation properties under extension
				of the ground field},
			date={2018},
			journal={Michigan Math. J.},
		}
		
		\bib{Ma71}{book}{
			author={Manin, Y.},
			title={Le groupe de {B}rauer-{G}rothendieck en g\'eom\'etrie
				diophantienne. {I}n},
			subtitle={Actes du {C}ongr\`es {I}nternational des {M}ath\'ematiciens},
			language={French},
			publisher={Gauthier-Villars},
			date={1971},
			volume={1},
			note={pp. 401-411},
		}
		
		\bib{Ne99}{book}{
			author={Neukirch, J.},
			title={Algebraic number theory},
			publisher={Springer-Verlag},
			date={1999},
		}
		
		\bib{Po09}{article}{
			author={Poonen, B.},
			title={Existence of rational points on smooth projective varieties},
			date={2009},
			journal={J. Eur. Math. Soc.},
			volume={11},
			pages={529\ndash 543},
		}
		
		\bib{Se73}{book}{
			author={Serre, J.-P.},
			title={A course in arithmetic},
			series={Graduate Texts in Mathematics},
			publisher={Springer-Verlag},
			date={1973},
			volume={7},
		}
		
		\bib{Se79}{book}{
			author={Serre, J.-P.},
			title={Local fields},
			series={Graduate Texts in Mathematics},
			publisher={Springer-Verlag},
			date={1979},
			volume={67},
		}
		
		\bib{Sk01}{book}{
			author={Skorobogatov, A.},
			title={Torsors and rational points},
			series={Cambridge Tracts in Mathematics},
			publisher={Cambridge University Press},
			date={2001},
			volume={144},
		}
		
	\end{biblist}
\end{bibdiv}

\end{document}